\documentclass{journal}

\usepackage{hyperref}

\title{Bellman's Forest Problem and Computability}

\author{Jacob Canel\\ Department of Mathematics, Pennsylvania State University}
\givenname{Jacob}
\surname{Canel}
\address{Department of Mathematics, The Pennsylvania State University, McAllister Building University Park, State College, PA 16802, United States}
\email{jmc8684@psu.edu}
\urladdr{}

\keywords{computability theory, analysis, computable analysis, metric geometry, geometric optimization}
\subject{primary}{msc2000}{Mathematical Logic}
\subject{secondary}{msc2000}{Computational Geometry}

\newtheorem{thm}{Theorem}[section]    % Standard theorem environment with numbering consecutive
\newtheorem{lem}[thm]{Lemma}          % Lemma environment with numbering 
\theoremstyle{definition}
\newtheorem{dfn}[thm]{Definition}
\newtheorem{example}[thm]{Example}
\newcommand{\N}{\mathbb{N}}

\begin{document}

\begin{abstract}    % type your abstract below

The goal of this paper is to refine methods in computable analysis and to employ them in the study of solutions of an optimization problem posed by Bellman \cite{bell}. This problem asks how to find optimal (shortest) paths which do not fit into given plane figures. We show that each instance of Bellman's problem has an arbitrarily small uniformly computable perturbation for which the minimal length of path is computable, and thus the class of optimal paths is a $\Pi_1^0$ class, and there are uniformly computable paths which escape with arbitrarily small length greater than the minimum possible.

\end{abstract}

\maketitle

%%%%%%%%%%%%%%%%%%%%   Start of main body of article
\section{Introduction}
\label{sec:intro}
We begin by paraphrasing Finch and Wetzel \cite{finch2004lost}
\begin{center}
    \textbf{Suppose that you are lost in a forest. You have a map, and so you know the shape and size of the forest. You do not have a compass, GPS, or any other way of determining where you are or what direction you are facing. What path do you take to guarantee the shortest time before you escape the forest?}
\end{center}

This is a geometric optimization problem posed by Bellman in 1956 \cite{bell}. It has faced a fair bit of scrutiny, and optimal solutions have been found for a few classes of shape. Much of the interest in this problem comes from the relation to Moser's Worm Problem \cite{moser1966poorly}, a curved cousin of the Kakeya problem \cite{kakeya}, but there has also been independent interest. It is, for example, one of Williams "Million Buck Problems" \cite{williams2002million}. 

For a family of strongly convex shapes, called the ``fat forests," the best path is simply the diameter of the shape itself (see Finch and Wetzel again \cite{finch2004lost}). For everything else, shapes that are sufficiently `thin' or nonconvex altogether, the problem is still mostly up in the air. The first example of a `thin' shape is a sufficiently long rectangular region (or simply an infinitely long one). 

The optimal path for such a region is a combination of lines and circular arcs which was first discovered by Zalgaller in \cite{Zalgaller1} and rediscovered repeatedly since then. It is affectionately known as the `broadworm' and the 'caliper'. We will use the former. The broadworm is a small outward perturbation of the path formed by two sides of an equilateral triangle with side length $1$, where the small side of the rectangle is also assumed to have length $1$.

A solution is not known for an equilateral triangle, although two notable attempts are zigzagging paths made of three line segments by Gross and Besicovitch, the latter of which is conjectured to be the optimal solution.

What has thus far been missing from our discussion is the existence (or not) of a general algorithm which can be presented with the data of a shape and output the data of a solution. This is where our part of the story starts. We will examine the problem from the point of view of computability theory, in particular to establish three things:

\begin{itemize}
    \item What constitutes a reasonable amount of information for a computer to be supplied to specify an instance of the problem?
    \item What constitutes a reasonable amount of information to specify a solution to the problem?
    \item What is it possible, impossible, or unlikely to be possible for a computer to do to turn answers to the former of these two into answers for the latter?
\end{itemize}

Along the way we will pick up and develop tools and arguments for the analysis of optimization problems in computably presented compact metric spaces, as well as some necessary geometric language for the description of the problem.

In section \ref{sec:two}, we will form the problem analytically, starting by specifying the class of forests under consideration, we then build an auxilliary objective function \hyperref[dfn:esc]{$\mathrm{esc}$} which will help us to discuss solutions. 

In section \ref{sec:three}, we discuss computable analysis. We establish effective systems to describe planar regions and paths, how to effectivize the structures from section two, and an \hyperref[thm:effArAs]{effective version} of the Arzela-Ascoli theorem.

In section \ref{sec:four}, we combine the results of the previous two sections to discuss the computability of the minimal path from the data of the forest, as well as counterexamples to possible extensions of it.

Section \ref{sec:moser} comprises our discussion of the Worm problem, how it may be presented in our terms, and the computability of the relevant quantities.

\subsection{Acknowledgments}
The author wishes to thank Prof. Linda Westrick for her guidance during the early parts of this project and to thank Prof. Jan Reimann for his advice on pulling it together towards the end. Additionally, many thanks are owed to the attendees of the Logic Seminar at Penn State for listening to the contents within at varying levels of coherency.

\section{The Bellman Forest Problem}
\label{sec:two}
\subsection{Forests}
\label{sec:forests}
We start by defining our terms:
\begin{dfn}[Forests and Paths]
A forest in $\R^n$ is a compact subset $A\subset\R^n$. A path in $\R^n$ of length $\leq L$ is an equivalence class of $L$-Lipschitz maps $f:[0,1]\rightarrow \R^n$ up to reparameterization, rotation, and translation. The set of such paths is denoted by $P^L$
\end{dfn}
\label{dfn:forpath}
This means that we will only be considering compact forests, but we will be considering them in arbitrary dimension. The restriction to compact forests is necessary for the problem to be presentable to a computer for arbitrary shape. Intuitively, this is because computers can only accept finite amounts of information at a time, and thus finite information must be able to specify the shape up to some finite error, which is equivalent to the $\epsilon$ net criterion for compactness. Additionally, unless restrictions are made on the structure of an unbounded forest, there need not be a finite solution to the problem at all. 

Our definition of a path requires less justification: all paths of length $\leq L$ are parameterizable as such a function. We could work with the rectifiable curves in $\R^n$ as our path space, but this is more troublesome to use for computations. The rules of the problem dictate that we not know our starting position or direction, and this is encoded by taking equivalence classes up to rotation and translation.

\begin{dfn}[Escape]
A map $f:[0,1]\rightarrow \R^n$ is said to escape a forest if its image intersects the boundary or the exterior. An equivalence class $[f]\in P^L$ is said to escape if every constituent path does.
\end{dfn}
\label{dfn:escape}

The \textit{Bellman Forest Problem} asks, given a forest $A$, what is the path of minimal length which escapes from it?

\begin{example}[The Disk]
If we take $A = \mathbb{D}\subset \R^2$, then the minimal escaping path is the equivalence class of the straight line segment of length $2$.
Suppose we have some path strictly shorter than this, and let $f$ be a representative. Translate $f$ so that $f(0.5)$ is the origin. No point in $[0,1]$ is more than $0.5$ away from $0.5$, and so no point in the image can be a distance more than $1$  from the origin. A similar argument shows that the line is unique among the paths of length 2 in that it achieves that maximal distance, and thus is the unique solution to the circular forest.
\end{example}
\label{example:disk}
The same argument will work for a ball of any dimension.

\begin{example}[Bellman's Examples]
Bellman's original question was about two particular shapes, namely an infinitely long rectangular region, and the half plane, with the latter having a guaranteed starting distance from the boundary. Both of these have been solved. Zalgaller solved the first \cite{Zalgaller1} and Isbell the second \cite{isbell1957optimal}.

At first glance, neither of these would seem to be good forests, as they are both non-compact. However, both of these forests have large translation symmetry group, essentially reducing the set of start points to either a single point in the latter case or a compact interval in the former. 

\end{example}
\label{example:bell}

\subsection{Attempts and Attempt Rating}
\label{sec:twopointone}
\begin{dfn}[Metric On Paths]
We denote by $d([f],[g])$ the metric $$\min_{\phi \in \mathrm{Iso}^+(\R^n)}d_{Haus}(\mathrm{im}(f),\mathrm{im}(g))$$ on paths $[f]$ and $[g]$. 
\end{dfn}
\label{dfn:pathmetric}
This metric renders the set $P^L$ compact and gives a filtration on the set of all finite length paths by the number $L$.

\begin{dfn}
We define the signed distance function $\mathrm{dist}^+(A,\partial A, x)$ by
$$\mathrm{dist}^+(A,\partial A, x) = \begin{cases}
    \mathrm{dist}(x,\partial A) & x\notin A\\
    -\mathrm{dist}(x,\partial A) & x\in A
\end{cases}$$
\end{dfn}
\label{dfn:distplus}

This function measures how far inside (negative) or outside (positive) a point is with regards to the forest $A$. It is $1$ Lipschitz with respect to the point $x$. 

We represent our shape by this function. In the case that $A$ is the closure of its interior, this is uniformly computationally equivalent to a pair of enumerations of the interior and exterior of $A$ as unions of basic open sets. This case covers most situations of interest, such as the case of a finite union of polytopes. Additionally, the case where boundary points are isolated from the interior is not relevant to the original problem, as touching the boundary counts as escape. If this restriction is not placed, the signed distance function contains strictly more information than such an enumeration. Furthermore, we contend that "given the shape" should mean "given (as an oracle) whether each point is inside or outside of the forest, and by how much."

We view a path in the plane with a defined starting point and direction as an instantiation of an attempt to solve the problem, e.g. a possible outcome of a lost hiker trying an element of $P^L$. It will be necessary to quantitatively measure the success or failure of such an outcome.

\begin{dfn}For a function $f:[0,1]\rightarrow \R^n$, we denote 
$$\mathrm{con}(A,\partial A, f) = \max_{x\in[0,1]}\mathrm{dist}^+(A,\partial A, f(x))$$
\end{dfn}
\label{dfn:con}
This function measures how well a hypothetical hiker following $f$ would do, where a higher number means more successfully escaping the forest. It is, in other words, the function which, if maximized among paths of a given length, would show a non-lost hiker how to get as far out of the forest as possible. Of course, such a maximal path (of sufficient length) in this case will always be the straight line connecting the start point to the nearest boundary point. It is nonetheless important that we consider non-straight-line paths, as heuristically, while a non-lost hiker can know where to go, the lost hiker may need to 'rule out' areas of forest by taking strange paths as they go. 

For example, in a forest given as the union of a long rectangle and a circle of radius much larger than the width but much smaller than the length of the rectangle, it may be logical to try something like the broadworm first, walk the diameter of the circle, and then try the broadworm again.  Now that we have a quantitative measure of the outcome of following a path, we need to know how well (or badly) an equivalence class of paths will perform. 

\begin{dfn} We define the worst-case-containment function
$$\mathrm{wcc}(A,\partial A, [f]) = \min_{\sigma\in \mathrm{Isom}^+(\R^+)} \mathrm{con}(A,\partial A, \sigma\circ f)$$
\end{dfn}
\label{dfn:wcc}
This function measures the outcome of an ideal adversarial player choosing a starting place and direction after the hiker has chosen their path. $wcc$ is positive on $[f]$ in $P^L$ iff $[f]$ escapes. With that fact in mind, we make one last definition.

\begin{dfn} We define the escape function
$$\mathrm{esc}(A,\partial A ,L) = \max_{[f]\in P^L}\mathrm{wcc}(A,\partial A, [f])$$
\end{dfn}
\label{dfn:esc}
The escape function is the answer to the question "how well can we do with a path of length at most $L$?" As each of these functions was constructed via alternating minimum and maximum operators over a series of compact spaces from a $1$-Lipschitz function, each successive function is also $1$-Lipschitz.

Furthermore, we will prove later that since $P^L$ is also a computably presented compact metric space, the function $\mathrm{esc}(A,\partial A, L)$ is a computable function. It is also monotone nondecreasing.

We can now rephrase the Forest problem as follows:

\begin{center}
    For a given forest $A$, find the minimal $L$ such that $\mathrm{esc}(A,L) = 0$, and find a witnessing path of this length.
\end{center}

In other words, we must find the smallest $L$ for which $\mathrm{wcc}$ is not entirely negative on $P^{L}$, and then must find an element of $P^L$ which has $\mathrm{wcc}(A,\partial A,[f]) = 0$.

In the absence of scrutiny, it would now appear that all we need to do is calculate this $L$, which for a computable increasing function should be computable. We could then simply search this space until we find a path that works. However, as we do not necessarily have that $esc$ is strictly increasing, just that it is nondecreasing, this is not the case. There are also difficulties with the "search phase" of the problem, which amount to the dual problems of computable closed sets of a computable metric space not having computable elements, and the fact that it is more difficult to find the $argmin$ of a function than $min$.

\section{The Effective Arzela-Ascoli Theorem}
\label{sec:three}
In the interest of rigorizing the statements about computability in the previous section, we now turn to a discussion of computable analysis. In particular, we have to discuss how we might represent analytic objects (which as a rule exist in continuum sized metric spaces) to a Turing machine, and make inferences about those representations.

\subsection{Naming Systems}
\label{sec:naming}
\begin{dfn}
    A name for a real number $\R$ is a sequence $(a,b):\N\rightarrow \Q^2$ such that $a_n < a_{n+1} < b_{n+1} < b_n < a_{n}+2^{-n} $
\end{dfn}
\label{dfn:names}

Given two names for different real numbers $(a ,b)$ and $(c,d)$, we will eventually have $b < c$ or $d < a$. So we will be able to tell from this data when two real numbers are different. On the other hand, if this has not happened by the $n$-th term in the sequence, the two numbers must be within $2^{1-n}$ of one another. However one cannot tell from any finite portion of a pair of names if they do in fact equal each other outright.

This is an important general fact about computable analysis: one can estimate numerical quantities and thus learn if they (or objects which correspond to them) are quantitatively similar or different, and to what degree, but the qualitative question of absolute equality is not something that can be reasoned about in a finitistic way.

\begin{dfn}[Computably Presented Compact Metric Spaces]
    A presentation of a compact metric space $X$ is a collection of the following data:
    \begin{itemize}
        \item An increasing sequence $A_n$ of isometric copies of finite subsets of the metric space, such that every element of $X$ is within $2^{-n}$ of $A_n$
        \item For each pair in $A_n$, a name for the distance between its elements.
        
    \end{itemize}
    A compact metric space is called "Computably Presented" if these Data are computable. We will refer to the space $\{x_n\in\prod A_n: d(x_{n+1}, x_{n} < 2^{-n}\}$ as $\hat{X}$.
\end{dfn}
\label{dfn:cpmetricspace}
\begin{dfn}
    A name for an element $p$ in a metric space $(X,d)$ is a sequence $p_n \in X$ with $d(p,p_n) < 2^{-n}$. If we are fixing a computable presentation of that metric space, we shall assume $p_n\in A_n$ and thus $\hat{X}$
\end{dfn}
\label{dfn:nameinmet}

There is a natural metric, called the Gromov-Hausdorff metric, which assigns distances between compact metric spaces, and this description of a presentation is a name for the compact metric space in this metric. The Gromov-Hausdorff metric is itself a computable metric space (although not a compact one). It is seperable with countable dense set of finite metric spaces with all rational distances. Interestingly, the Gromov-Hausdorff space is also geodesic, despite being neither locally compact nor defined locally as a convex subspace of an infinite dimensional Banach space.

\begin{dfn}[Moduli of Continuity]
A function $\epsilon(\delta)$ from $[0,\infty)\rightarrow [0,\infty)$ is a modulus of continuity for a function $f:X\rightarrow Y$ if $d_X(s,t)<\delta$ implies that $d_Y(f(s),f(t))<\epsilon(\delta)$. 
\end{dfn}
\label{dfn:moduli}
As a modulus on a geodesic space may be replaced with the pointwise largest subadditive function below it without changing the set of functions to which it applies, we will mostly be discussing subadditive moduli. Likewise, there may be discontinuous functions that obey a discontinuous modulus or a modulus with $\epsilon(0) > 0$, and we only care about moduli which guarantee continuity, so will only discuss moduli which are continuous and increasing with $\epsilon(0) = 0$.

We restrict ourselves using specific moduli of continuity for the same reason that this is necessary for continuous model theory (see the introduction by Hart \cite{hartcontinuous}): because the Arzela-Ascoli theorem guarantees that functions between compact metric spaces obeying a uniform modulus are a compact family. This fact shows up in a number of places and is essential to our entire argument.

\begin{example}
    The key examples of moduli of continuity are the Holder moduli $\epsilon(\delta) = L (\delta)^d$, which reduce to Lipschitz continuity in the case $d = 1$. In general $d\leq 1$, or else the function with this modulus must be locally constant and the modulus is super additive. 
\end{example}
\label{example:holder}

The Holder moduli are interesting because they encode a notion of relative dimension between two spaces: the image of an n-dimensional (in Hausdorff dimension) compact metric space is at most $n/d$ under a $d$-Holder map. Holder moduli also arise frequently in analysis and its applications, as they are related to quantitative decay conditions on Fourier coefficients. If two moduli apply to functions $f$ and $g$, the composition of these is a modulus applying to $f\circ g$.

\begin{example}
    Let $f:X\rightarrow Y$ be a map between compact metric spaces, and let $\theta(\delta) = \max_{d_X(s,t)\leq\delta}d_y(s,t)$. $\theta$ is clearly a modulus for $f$, but is not clearly subadditive. However the function $\theta'(\delta) = \max\{\theta(\delta), \sup_{a<\delta < b}(\frac{\theta(b)-\theta(a}){b-a}(\delta-a)\}$ is subadditive. We may also add an additional term of $\delta$ if we wish to ensure that the modulus is strictly increasing.
\end{example}
\label{example:empirical]}
Rather than considering functions themselves, we are restricted to working with their lifts to presentations. We must therefore consider moduli relative to presentations rather than metrics in the absolute sense.

\begin{dfn}
    A name for a function $f:A\rightarrow B$ between two metric spaces with presentations $A_n$ and $B_n$ is a sequence of maps $f_n:A_n\rightarrow B_n$ such that $|f_k(a) -f(a)| < 2^{-k}$ for all $a \in A_k$.
\end{dfn}
\label{dfn:funcname}

This is a name for the function in the usual supremum norm for continuous functions. It is notable that while this data is sufficient to uniquely describe any continuous function $A\rightarrow B$, and any such function will have such a description, such a sequence of partial functions need not converge to an actual continuous function. In the purely analytic setting, this is unproblematic, but computability issues do arise.

\subsection{Descent Of Functions}
\label{sec:descent}
\begin{dfn}[Spaces Relative To Moduli]
Let $\epsilon(\delta)$ be a subadditive, strictly increasing function with $\epsilon(0) = 0$ and let $(X,d)$ be a compact metric space. The space $X_\epsilon$ is defined to be $X$ with the metric $\epsilon(d)$. 
\end{dfn}
\label{dfn:relativespaces}

$(X,d)$ is homeomorphic to $(X,\epsilon(d))$ via the identity on $X$, which obeys the $\epsilon$ modulus with respect to these metrics. Every function with modulus $\epsilon$ from $X$ to $Y$ factors through $X_\epsilon$ via composition with a $1$-Lipschitz map.

There is an algorithm, uniform in a presentation $\hat{X}$ of $X$ and the modulus $\epsilon$, which produces a presentation of $X_\epsilon$. The algorithm amounts to reassigning finite strings in the presentation of $X$ to elements of the $A_n$ as appropriate. This enables us to work exclusively with the $1$-Lipschitz modulus in proofs without losing any computability, as long as we remember that we really work relative (in computability) to the modulus in question.

\begin{dfn}[Descent]
Let $f:\hat{X}\rightarrow \hat{Y}$ be a $1$-Lipschitz map between presentation spaces of $X$ and $Y$. We say that $f$ descends if there is a function $g$ such that $p_Y\circ f = g\circ p_X$. This will occur iff $f$ obeys some modulus $\epsilon$ with respect to $\hat{d}_X$ and $\hat{d}_Y$
\end{dfn}
\label{dfn:descentfunc}
The question of descent is an important one, and connects the topology of the underlying spaces with questions about compatibility. In particular, it is possible to check, computably, if $f$ is $1$-Lipschitz at certain scales, i.e. if partial names are mapped to other partial names in distance decreasing manner. Obeying that modulus globally, then, is a universal quantification over a computable predicate, taking in a function as a map of partial names. We should therefore expect that the class of $1$-Lipschitz functions $\hat{X}\rightarrow \hat{Y}$ which descend are a uniform $\Pi^0_1(\hat{d_X}\oplus \hat{d_Y})$ class, e.g. an effectively closed subset of the space of $1$-Lipschitz functions $\hat{X}\rightarrow \hat{Y}$.

This space, as a subspace of the continuous functions from $X\rightarrow Y$ is metrizable using the usual sup metric (called the sup norm in the case that $Y$ is a Banach space. Per Arzela-Ascoli, it is a compact space with this metric.

An effective (computable) version of Arzela-Ascoli follows, and is essential for the core of the Forest Problem algorithm.

A presentation of this space as a $\Pi^0_1(p_X\oplus p_Y)$ subspace of the maps $\hat{X}\rightarrow \hat{Y}$ is a good start, but we do have a problem: $\Pi_0^1$ classes are not valid presentations! More specifically, they can have "dead ends," where it is impossible to extend a partial name to a full one. 

Take, for instance, the space of maps from $[0,1]\rightarrow 2^\N$ with the standard metrics. At the level of partial names, there are nonconstant maps in this family. If we look to make sure that the 1-Lipschitz modulus is respected at the scale $1/8$th. The map which assigns each real number to a sequence of five zeroes and then a million digits in its decimal expansion will be allowed at this scale, but as $[0,1]$ is connected so there can be no function remotely near to this that is continuous, as all continuous functions of this kind are constant. Since this phenomenon occurs at every scale, we cannot simply prune a finite, or even computable, set of dead ends to get a proper presentation.

The fundamental problem here is that the target space of the function is vastly less connected than the source. We can alleviate this problem by making the source less connected or the target more connected. We now turn to the specifics.

\subsection{Effective Arzela-Ascoli}
\begin{thm}
    Let $p_X:\hat{X}\rightarrow X$ and $p_X:\hat{X}\rightarrow X$ be presentations of compact metric spaces $X$ and $Y$, and let $\epsilon$ be a subadditive, increasing, function with $\epsilon(0) = 0$. The space $(X\rightarrow Y)_\epsilon$ of functions from $X$ to $Y$ obeying this modulus is a uniform compact $\Pi^0_1(p_X\oplus p_Y\oplus \epsilon)$ class within the uniformly relatively computably presented metric space $(\hat{X}\rightarrow \hat{Y})$, which gives a uniformly (in $p_X\oplus p_Y\oplus \epsilon$) computably presented compact metric space in the cases that:
    \begin{enumerate}
        \item $X$ is totally disconnected
        \item If $X= [0,1]$ and $Y$ is a convex metric space.
    \end{enumerate}
\end{thm}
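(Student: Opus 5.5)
First reduce to the $1$-Lipschitz case. By the discussion following the definition of $X_\epsilon$, a presentation of $X_\epsilon$ is computable uniformly from $p_X\oplus\epsilon$, and the functions $X\to Y$ with modulus $\epsilon$ are exactly the $1$-Lipschitz maps $X_\epsilon\to Y$. So it suffices to treat $\epsilon(\delta)=\delta$, relativized to the oracle $p_X\oplus p_Y\oplus\epsilon$. A map is then coded by a name $(f_n)$ with $f_n:A_n\to B_n$. The $\Pi^0_1$ part follows the remark after the definition of descent. A sequence $(f_n)$ names a $1$-Lipschitz map iff for all $n,m$ and all $a\in A_n$, $a'\in A_m$ we have $d(f_n(a),f_m(a'))\le d(a,a')+2^{-n}+2^{-m}$ (with a small fixed slack absorbing the name tolerances), together with the coherence conditions $d(f_n(a),f_{n+1}(a))<2^{-n}+2^{-n-1}$. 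Since the distances are given by names, the failure of each instance is c.e. So the set of admissible sequences is a $\Pi^0_1$ class in the computably branching tree $\prod_n B_n^{A_n}$.

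For the converse direction I would define $f(x)=\lim f_n(a_n)$ along a name $(a_n)$ of $x$. This limit is Cauchy by the inequality, independent of the name, and $1$-Lipschitz, so the class is exactly the descending names. Compactness is then Arzelà–Ascoli classically, or König's lemma on the finitely branching tree.

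The real content is producing a computable presentation, i.e.\ computable finite $2^{-n}$-nets of actual functions with computable distances. Equivalently, we must decide, up to a $2^{-n}$ error, which finite partial names (level-$k$ tables $g:A_k\to B_k$ that are approximately $1$-Lipschitz) are within a small error of a genuine $1$-Lipschitz map. The goal is to show that in cases (1) and (2) every approximately Lipschitz table extends, with controlled loss, so no dead ends occur. I would prove an extension lemma for each case.

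In case (1), $X$ totally disconnected, I would first use the oracle to refine the nets so that $X$ splits into clopen pieces of diameter $<2^{-k}$ indexed by $A_k$. I would then define $f$ as a function that is locally constant on these pieces. Such an $f$ is Lipschitz only up to an additive $2^{-k}$, which is exactly what is needed to see it as an element of $X_\epsilon$-type approximate Lipschitz classes. So here I would prove that every table obeying the inequality at level $k$ is within $2^{-k+c}$ of a true element by McShane-type truncation. I suspect this is the main obstacle, and would handle it by presenting the space as the closure of the locally constant approximants. The distances between two such approximants are then computable as maxima over finitely many pieces.

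In case (2), $X=[0,1]$ and $Y$ convex, I would extend a table on the dyadic grid $A_k$ by geodesic (convex-combination) interpolation between consecutive grid points. This is exactly $1$-Lipschitz when the table is, and within $2^{-k}$ otherwise after a rescaling $f\mapsto$ the contraction toward the grid values, using convexity of $Y$. The sup distance between two piecewise-geodesic maps is computable from the finitely many grid values, up to the error of approximating geodesics in $Y$ by $B_m$. Hence the interpolants of all approximately Lipschitz level-$k$ tables form uniformly computable $2^{-k+c}$-nets, which gives the presentation.
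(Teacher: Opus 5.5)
Your overall architecture matches the paper's: reduce to the $1$-Lipschitz modulus via $X_\epsilon$, describe the descending names as a $\Pi^0_1$ class, and rule out dead ends with an extension step, using locally constant maps in case (1) and interpolation in case (2). The genuine gap is the case (1) extension step, which you already suspect is the main obstacle.

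\textbf{Case (1).} You claim that every level-$k$ table passing your inequality is within $2^{-k+c}$ of a genuine $1$-Lipschitz map ``by McShane-type truncation''. This is false for a general compact $Y$. McShane's formula $f(x)=\min_a\bigl(g(a)+d(x,a)\bigr)$ uses the order structure of $\mathbb{R}$. It works coordinatewise for intervals and $\ell^\infty$-boxes, and more generally for injective (hyperconvex) targets, but not otherwise.

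For a counterexample, take $X=\{p,q\}$ with $d(p,q)=1$, and $Y=\{y_0,y_1\}$ with $d(y_0,y_1)=1+\eta$, $\eta>0$. The table $p\mapsto y_0$, $q\mapsto y_1$ passes your level-$k$ test whenever $\eta\le 2\cdot 2^{-k}$. Yet every genuine $1$-Lipschitz map is constant and lies at sup-distance $1+\eta$ from it.

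Worse, let $\eta=\eta_e$ be $2^{-s}$ if the $e$-th machine halts at stage $s$ and $0$ otherwise; this is uniformly computable in $e$. Then the space of $1$-Lipschitz maps is four points pairwise at distance $1$ when $\eta_e=0$, and two points otherwise. From a uniformly computed $1/4$-net you could enumerate the non-halting $e$, by searching for three net points pairwise more than $1/2$ apart. So case (1) with $Y$ arbitrary admits no uniform proof at all. Your step needs a hypothesis on $Y$, such as injectivity.

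Your fallback, presenting the space as the closure of the locally constant approximants, does not help. Those approximants are not elements of the space, and deciding which of them lie near genuine elements is exactly the dead-end problem. The paper's ``compression'' argument produces the same locally constant, only approximately Lipschitz maps, so it does not supply the missing idea either.

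\textbf{Case (2).} This is the paper's interpolation argument and is sound up to bookkeeping, but as written it has a scale problem. Your additive tolerance $2\cdot 2^{-k}$ is as large as the grid spacing $2^{-k}$. So consecutive table values of a passing table can be $3\cdot 2^{-k}$ apart, and the interpolant can be $3$-Lipschitz. The claim ``within $2^{-k}$ after a contraction toward the grid values'' is therefore unjustified.

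The fix is to decouple the two scales. Use grid spacing $h$ and value accuracy $\delta\ll h$ (say $\delta=h^2$). The geodesic interpolant $f$ is then $(1+2\delta/h)$-Lipschitz. The reparametrization $t\mapsto f\bigl(t/(1+2\delta/h)\bigr)$ is genuinely $1$-Lipschitz and within $2\delta/h$ of $f$. This step needs no convexity; convexity is used only to interpolate. The resulting interpolants then form an $O(h)$-net of genuine elements.

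Two further points:
\begin{itemize}
\item The reduction through $X_\epsilon$ does not keep the domain an interval, since the metric $\epsilon(|s-t|)$ is not geodesic. So your argument covers $\epsilon(\delta)=L\delta$, which is all $P^L$ needs, rather than a general modulus. The paper's reduction has the same limitation.
\item Your coherence condition should use $\le$ rather than $<$, because failure of a strict inequality between computable reals is not c.e.
\end{itemize}
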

\label{thm:effArAs}
\begin{proof}
We work in the $1$-Lipschitz modulus and extend to other cases via the relativization process outlined in the previous section. Let 
$$A = \{f:\hat{X}\rightarrow \hat{Y}:  \hat{d}_{Y}(f(s),f(t)) \leq d_{\hat{X}}(s,t)\} $$
This is a computably presented metric space: for the presentation, as the $1$-Lipschitz condition above ensures that the description of such functions is finitary: that only finitely many natural number data are required to describe such a function to suitable accuracy. It is also clear that $A$ includes all the functions of interest, as any $1$-Lipschitz function on $X$ induces one on $\hat{X}$. However, it may also contain extraneous functions which obey the modulus with respect to $d_{\hat{X}}$ but not $\hat{d}_X$. The metric on this space is computable as the metric on $Y$ is. We now introduce some more sets:

$$A_n = \{f\in A: \forall \sigma_1,\sigma_2\in 2^n, (d_{\hat{X}}(\sigma_1,\sigma_2) < \delta \implies \hat{d}_Y(f(\sigma_1), f(\sigma_2))) \leq\delta\}$$
This is the set of functions which obey the modulus up to scale $2^{-n}$. It is a $\Pi^0_1(p_X\oplus p_Y)$ class, and so is the intersection $\cap_{n\in\N} A_n$ This gives the first part of the theorem, and leaves the cases left to prove.

If $X$ is totally disconnected, then the "compression" function sending everything described by a partial name to its eventually constant sequence is $1$-Lipschitz continuous, and the map obtained by composing a partial $f$ with the compression map gives a full description of a function.

Any partial map from $[0,1]$ to a convex space, obeying the $1$-Lipschitz condition can be extended by simple interpolation of points, and in the limit this extends to a full map obeying the condition. This argument extends to maps from Euclidean $[0,1]^d$ to $[0,1]^n$, but trying to extend it to more complicated domains firmly puts one out of the scope of this work and into the territory of metric geometers. See the famous book by Burago et al \cite{bbi} for more details.
\end{proof}

\section{Solutions to the Forest Problem}
\label{sec:four}
We now discuss the computable analysis of the Escape Function from section 2.2. We note first of all that $\R^n$ is a computably presented metric space, and that any compact forest can be contained in a large ball of radius much greater than the diameter of the forest itself, which is itself a computably presented compact metric space. The signed distance function is our presentation of the space in the first place, and is Turing equivalent in the case that the forest is the closure of its interior to an enumeration of the open sets inside the forest and those outside.

The function $con$ is a maximum of a computable $1$-Lipschitz function over a computably presented compact space ($[0,1]$), as long as $f$ is computable itself. Such a maximum is a computable $1$-Lipschitz function (in this case in the Hausdorff metric on the image) also. The taking of this maximum is uniform in $\mathrm{dist}^+$.

For a computable $f$, to compute $\mathrm{wcc}(A,\partial A, f)$, one needs to recognize two facts. First that the set of rotations $SO(n)$ is compact, and secondly that the set of translations which keep the image of $f$ inside the large enough ball is also compact. Indeed, these transformations together form a computably presented compact space: a closed and bounded subset of some $\R^d$. This search for a minimum is, again, uniformly computable  in $\mathrm{dist}^+$. The resulting function is again $1$-Lipschitz in $f$, and invariant under translations and rotations, and so is a uniformly in $\mathrm{dist}^+$ computable $1$-Lipschitz function in $[f]$.

However, the escape function is simply taking a maximum of $wcc$ over $P^L$, but for $L$ computable, $P^L$ is a computable quotient of a computable metric space: the $L$-Lipschitz functions, which are a computably presented compact space by Theorem 3.10. This maximum is thus also a $1$-Lipschitz function of $L$, which can be taken to have the domain $[0,\mathrm{Diam}(A)]$

Since $P^L$ is a filtration in $L$, $\mathrm{esc}$ is a nondecreasing function as well. We therefore have that $\mathrm{esc}(A,\partial A, L)$ is a nondecreasing $1$-Lipschitz function uniformly computable in $\mathrm{dist}^+$. As mentioned in section 3.2, we now have to contend with a series of possibilities. We denote 
$$L_1 = \sup\{\lambda: \mathrm{esc}(A,\partial A,\lambda) < 0\}$$
$$L_2 = \inf\{\lambda: \mathrm{esc}(A,\partial A,\lambda) > 0\}$$

$L_1$ is uniformly left c.e., and $L_2$ is uniformly right c.e.. We should not,  a priori, expect either of these numbers to be uniformly computable. However they will be computable, and uniformly so, in the case where they happen to coincide. Intuition could tell an interested mathematician that the case where the two coincide is inevitable or unlikely in general. As it turns out, the two will coincide more often than not, but not always.

\subsection{The Ideal Case: Cocountably Often}
We generalize slightly:
\label{sec:cocount}
$$L_1(r,f) = \sup\{\lambda: f(\lambda) < r\}$$
$$L_2(r,f) = \inf\{\lambda:f(\lambda) > r\}$$
for $r\in(\min f, \infty)$ and $f$ a monotone nondecreasing function with $\lim_{x\rightarrow \infty}f(x) = \infty$

Since $\cup_{\min f<r<M}  (L_1(r,f),L_2(r,f)) $ is a bounded set and the intervals are disjoint, the set of $r$ for which the interval has positive measure is at most countable. We should expect, in other words, that any random number we pick should have its $L_2$ and $L_1$ coincide.

It would be reasonable to object that this doesn't tell us anything about the number zero, which is the only $r$ value that has a clear geometric meaning. We can see the geometric meaning of other $r$ values as well by the following construction:

Let $A$ be a forest, and let $A_r = \{x\in\R^n: \mathrm{dist}^+(A,\partial A, x) \leq r\}$. Each $A_r$ is compact. 
\begin{lem}[Levels of the Escape Function Correspond to Perturbations]
$L_1(r,\mathrm{esc}(A,\partial A)) = L_2(r,\mathrm{esc}(A,\partial A))$ iff $L_1(0,\mathrm{esc}(A_r,\partial A)_r) = L_2(0,\mathrm{esc}(A_r,\partial A_r))$
\end{lem}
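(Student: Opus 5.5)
The plan is to compare the signed distance functions of $A$ and $A_r$ pointwise. Then I push the comparison through the three optimizations $\mathrm{con}$, $\mathrm{wcc}$, $\mathrm{esc}$. The lemma should then follow because $L_1$ and $L_2$ depend only on the strict sub- and super-level sets of the escape function. I will write $\phi=\mathrm{dist}^+(A,\partial A,\cdot)$ and $\psi=\mathrm{dist}^+(A_r,\partial A_r,\cdot)$, so that $A_r=\{\phi\le r\}$.

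\emph{Step 1: pointwise sign comparison.} Since $\phi$ is continuous, $\{\phi<r\}$ is an open subset of $A_r$ and hence lies in its interior, so $\phi(x)<r\Rightarrow\psi(x)<0$. Likewise $\{\phi>r\}$ is open and disjoint from $A_r$, so $\phi(x)>r\Rightarrow\psi(x)>0$. Conversely, $\psi(x)>0$ means $x\notin A_r$, i.e.\ $\phi(x)>r$. Also $\psi(x)<0$ gives $\phi(x)\le r$, with equality possible only where $\phi$ has a local maximum with value $r$ (a point interior to $A_r$ on the level set $\{\phi=r\}$).

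\emph{Step 2: pass through the optimizations.} All three operations are $\max$ or $\min$ over compact parameter sets of continuous functions, so the extrema are attained. Hence strict-sign statements transfer: a maximum is $>c$ iff some value is, and a minimum is $>c$ iff every value is. Applying Step 1 inside $\mathrm{con}$, then $\mathrm{wcc}$ (min over rigid motions), then $\mathrm{esc}$ (max over $P^L$), I expect to get for every $L$:
\[
\mathrm{esc}(A,\partial A,L)>r \iff \mathrm{esc}(A_r,\partial A_r,L)>0,
\]
and
\[
\mathrm{esc}(A,\partial A,L)<r \Rightarrow \mathrm{esc}(A_r,\partial A_r,L)<0 \Rightarrow \mathrm{esc}(A,\partial A,L)\le r .
\]
The first line gives $L_2(r,\mathrm{esc}(A,\partial A))=L_2(0,\mathrm{esc}(A_r,\partial A_r))$ outright. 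The second gives
\[
L_1(r,\mathrm{esc}(A,\partial A))\le L_1(0,\mathrm{esc}(A_r,\partial A_r))\le L_2(0,\mathrm{esc}(A_r,\partial A_r)) .
\]
From these, $L_1=L_2$ at level $r$ for $A$ implies $L_1=L_2$ at level $0$ for $A_r$.

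\emph{Step 3: the converse direction, which I expect to be the main obstacle.} The converse needs the reverse inequality for $L_1$, i.e.\ that $\mathrm{esc}(A_r,\partial A_r,L)<0$ forces $\mathrm{esc}(A,\partial A,L)<r$. This can fail exactly at plateau-type local maxima of $\phi$ at height $r$. Such maxima occur, for instance, at the centre of a hole when $A$ is an annulus and $r$ is the maximal distance into the hole. There $\psi<0$ while $\phi=r$.

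My first attempt to close the gap is to use monotonicity of $\mathrm{esc}$ in $L$. If $\mathrm{esc}(A,\partial A,\cdot)$ equals $r$ on an interval $(a,b)$, I would try to show $\mathrm{esc}(A_r,\partial A_r,\cdot)$ is $0$ on a comparable interval. To do this I would perturb the optimal paths slightly so that their worst case avoids the plateau points. This should be possible because such points are local maxima of $\phi$, and a path attaining $\mathrm{con}=r$ only at an interior local maximum can be extended by a tiny length to reach $\{\phi>r\}$ nearby, if any such points are nearby. If that perturbation argument breaks down, the fallback is to prove the lemma for all $r$ outside the (countable) set of values at which $\phi$ has such plateau maxima. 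Alternatively, one can replace $A_r$ by the closure of $\{\phi<r\}$, for which Step 1 becomes a genuine equivalence $\psi<0\iff\phi<r$ and the whole argument goes through symmetrically.
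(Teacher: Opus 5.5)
Your Steps 1 and 2 are correct and are essentially the paper's proof. The paper shows exactly that $\mathrm{esc}(A,\partial A,L)<r$ implies $\mathrm{esc}(A_r,\partial A_r,L)<0$, and that $\mathrm{esc}(A,\partial A,L)>r$ implies $\mathrm{esc}(A_r,\partial A_r,L)>0$, and then stops. So the paper, like you, only establishes the forward implication, which is all the subsequent limit-solvability theorem uses. Your pointwise identity $\{\psi>0\}=\{\phi>r\}$ gives $L_2(r,\mathrm{esc}(A,\partial A))=L_2(0,\mathrm{esc}(A_r,\partial A_r))$ and the sandwich $L_1(r,\cdot)\le L_1(0,\cdot)\le L_2(0,\cdot)=L_2(r,\cdot)$, which is a sharper way to package it.

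The gap is Step 3, and it cannot be filled, because the converse implication is false. Take $A=\{|x|\le 1\}\cup\{2\le |x|\le 3\}\subset\mathbb{R}^2$ and $r=1/2$.
\begin{itemize}
\item On the hole $1<|x|<2$ one has $\phi=\min(|x|-1,\,2-|x|)\le 1/2$. So $A_{1/2}$ is the closed disk of radius $7/2$, with $\mathrm{esc}(A_{1/2},\partial A_{1/2},L)=L/2-7/2$, hence $L_1=L_2=7$ at level $0$.
\item But $\{\phi<1/2\}$ is the open disk minus the circle $|x|=3/2$, along which $\phi$ attains the local maximum value $1/2$.
\item A segment of length $\ell\in[2\sqrt{10},7)$ fits in $\mathrm{int}\,A_{1/2}$, yet cannot be placed in $\{\phi<1/2\}$. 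Being connected, it would have to lie in the disk $|x|<3/2$, whose segments are shorter than $3$, or in the annulus $3/2<|x|<7/2$, whose segments are shorter than $2\sqrt{10}$.
\end{itemize}
Hence $\mathrm{esc}(A,\partial A,\cdot)\equiv 1/2$ on $[2\sqrt{10},7]$, and $L_1(1/2,\mathrm{esc}(A,\partial A))\le 2\sqrt{10}<7=L_2(1/2,\mathrm{esc}(A,\partial A))$.

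Your proposed repairs fail accordingly.
\begin{itemize}
\item Perturbation: near a point of $\mathrm{int}\,A_r$ with $\phi=r$ there are, by definition, no points of $\{\phi>r\}$ to extend toward. The target of the argument is also simply false here, since $\mathrm{esc}(A_{1/2},\partial A_{1/2},\cdot)<0$ on $[2\sqrt{10},7)$. Your annulus example, with a single ridge point, is actually harmless in the plane, because a small translation moves any path off a point; the trouble comes from ridges that paths cannot dodge.
\item Closure: replacing $A_r$ by $\overline{\{\phi<r\}}$ gives the same disk here. In general $\mathrm{int}\,\overline{U}$ can strictly contain an open set $U$, so $\psi<0\iff\phi<r$ does not follow.
\end{itemize}

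What you can prove is either the forward implication, or the full equivalence for $r$ outside the set of local-maximum values of $\phi$. For such $r$ one has $\mathrm{int}\,A_r=\{\phi<r\}$, so your Step 1 becomes symmetric. The exceptional set is countable, since each such value equals $\max_{\bar B}\phi$ for some ball $B$ with rational center and radius.
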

\label{lem:levels}

\begin{proof}
Let $\mathrm{esc}(A,\partial A, L) < r$. Then for each path of length $L$, there is a positioning of that path such that every point has $\mathrm{dist}^+$ value strictly less than $r$, and thus the path can be contained in $\cup_{s<r}A_s$. Thus $\mathrm{esc}(A_r,\partial A_r, L) < 0$

Let $\mathrm{esc}(A,\partial A, L) > r$, then there is a path of length $L$ which achieves escape value $r+\epsilon$ for some epsilon. No matter how this path is reoriented or translated, some point of it is at least $\epsilon$ away from $A_r$, and thus $\mathrm{esc}(A_r,\partial A_r, L) > \epsilon$. Thus we conclude that the $A\rightarrow A_r$ operation preserves strict monotonicity at $r$. 
\end{proof}

The forests $A_r$ are quite similar to $A$ for small (positive or negative) $r$, approaching it in the Hausdorff distance for positive $r$ (or for negative $r$ in the case that $A$ is the closure of its interior). They are "nicer" in the sense that they satisfy an internal (if $r>0$) or external (if $r<0$) $C^2$ boundary condition.

In other words, because we have considered such general shapes, making no claims of convexity, simple connectedness, or any other regularity beyond simple compactness. The shapes we have thus far discussed could, in principal, be as pathological as one would like, having any dimension or any fundamental group. 

The punishment for this is that we cannot expect good behavior from the shape we arbitrarily select, but the family of shapes it determines by the $r$-perturbations will be well behaved co-countably often.

\begin{lem}
Let $L_1 = L_2$ for some forest $A$, then there is a uniform algorithm which limit computes a path $f$ of length $L_1$ with $\mathrm{wcc}(A,\partial A, [f]) = 0$.
\end{lem}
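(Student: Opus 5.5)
Since $L_1 = L_2$, the common value $L^\ast$ is computable uniformly in $\mathrm{dist}^+$: $L_1$ is left c.e. and $L_2$ is right c.e., so we enumerate rational lower bounds for $L_1$ and rational upper bounds for $L_2$ and interleave them. Fix a computable name $(q_n)$ for $L^\ast$. By the effective Arzela--Ascoli theorem (case 2, with $X=[0,1]$ and target a large closed ball in $\R^n$, which is convex), the space of $L^\ast$-Lipschitz maps $[0,1]\to \R^n$ is a computably presented compact metric space uniformly in $L^\ast$. Hence so is its quotient $P^{L^\ast}$, as the paper already observes. The section shows that $\mathrm{wcc}(A,\partial A,\cdot)$ is a uniformly computable $1$-Lipschitz function on it.

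Let $S=\{[f]\in P^{L^\ast} : \mathrm{wcc}(A,\partial A,[f]) \geq 0\}$. First I check that $S$ is nonempty. Since $L_2=L^\ast$, $\mathrm{esc}$ is continuous and nondecreasing, and $\mathrm{esc}(\lambda)>0$ for all $\lambda>L^\ast$, we get $\mathrm{esc}(L^\ast)\geq 0$. The maximum defining $\mathrm{esc}$ is attained by compactness, so $S\neq\emptyset$. Similarly $\mathrm{esc}(L^\ast)\leq 0$ using $L_1$, so every element of $S$ has $\mathrm{wcc}=0$ exactly. Moreover $S$ is a nonempty $\Pi^0_1$ class in a computably presented compact space, uniformly in $\mathrm{dist}^+$: a point is excluded once a rational upper bound on $\mathrm{wcc}$ at a basic neighbourhood is found to be negative.

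Next, produce an element of $S$ computable in the limit. At stage $s$, take the finite presentation level $A_s$ of $P^{L^\ast}$ and evaluate $\mathrm{wcc}$ to precision $2^{-s}$ at each point of $A_s$. Let $g_s$ be the first point (in a fixed ordering) whose approximation is $\geq -2^{-s}-2^{-s}$, i.e. not yet refuted at scale $s$. This is the standard argument that a nonempty $\Pi^0_1$ subset of a computably compact space has a $\Delta^0_2$ (limit computable) member. Organise it as a tree search: maintain a current finite path of nested basic balls, each not yet refuted. When the ball at depth $k$ becomes refuted, move to the next candidate at that depth and discard deeper choices. By compactness each ball is refuted after finitely many stages iff it misses $S$, so each depth changes only finitely often. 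The limit is a sequence of nested balls whose centres form a name of some $[f]\in S$.

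Finally, choose the representative $f$ as an $L^\ast$-Lipschitz map; its length is $\leq L^\ast$. Its length is not $<L^\ast$, since otherwise $\mathrm{esc}$ at a smaller value would be $\geq 0$, contradicting $L_1=L^\ast$. So $f$ has length exactly $L^\ast$ and $\mathrm{wcc}=0$. All steps are uniform in $\mathrm{dist}^+$.

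The main obstacle I expect is the bookkeeping of the limit search. The candidates $g_s$ must be nested, not merely near-optimal at each scale, so the limit must be a genuine point of $S$ rather than a sequence oscillating among near-optimal paths in different regions of $P^{L^\ast}$. The tree-of-balls finite-injury scheme is my planned fix, and it relies essentially on compactness from the Arzela--Ascoli theorem.
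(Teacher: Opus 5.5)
Your proposal is correct and follows the same route as the paper: interleave the left enumeration of $L_1$ with the right enumeration of $L_2$ to compute $L^\ast$, use effective Arzela--Ascoli to present $P^{L^\ast}$ as a computably compact space on which $\mathrm{wcc}$ is computable, and limit-compute a member of the nonempty $\Pi^0_1$ class of optimal paths. The details you add beyond the paper's sketch are all sound: that $\mathrm{esc}(L^\ast)=0$, so the class is nonempty and consists exactly of the paths with $\mathrm{wcc}=0$; the finite-injury tree search, where a closed ball should count as refuted once a finite cover of it by regions with $\mathrm{wcc}<0$ is found, which is what makes ``refuted iff it misses $S$'' true; and the check that the length is exactly $L^\ast$.
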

\label{lem:limit}
\begin{proof}
If $L_1 = L_2$, then the uniform algorithms left enumerating $L_1$ and right enumerating $L_2$ can be successfully combined into a uniform (but not convergent in defective cases!) algorithm computing $L_1$. If $L_1$ is relatively computable, then $P^{L_1}$ is a uniformly relatively computably presented compact metric space, and thus $\mathrm{wcc}$ is a uniformly relatively computable function on $P^{L_1}$. The elements of it which attain the optimal value of zero are a uniform relative $\Pi^0_1$ class, and thus we may uniformly limit-compute an optimal path.
\end{proof}

\begin{thm}[Almost Every Forest Problem is Limit Solvable By A Uniform Algorithm]
We claim now that the forest problem is, except at a countable number of cases in a given family, solvable in the limit.
\end{thm}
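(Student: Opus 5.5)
The plan is to combine the countability argument of Section \ref{sec:cocount} with Lemma \ref{lem:levels} and Lemma \ref{lem:limit}. I read ``a given family'' as the one-parameter family of perturbations $\{A_r\}_{r}$ attached to a forest $A$. For $r$ near zero these are the natural small perturbations: they are Hausdorff-close to $A$ for $r>0$, and also for $r<0$ when $A$ is the closure of its interior.

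\textbf{Step 1: only countably many bad parameters.} Set $f(\lambda)=\mathrm{esc}(A,\partial A,\lambda)$. By Section \ref{sec:four} this is nondecreasing, $1$-Lipschitz, and uniformly computable in $\mathrm{dist}^+$. To meet the hypothesis $\lim_{x\to\infty} f(x)=\infty$, I would either extend $f$ past $\mathrm{Diam}(A)$ linearly, or observe that a straight segment of length $L$ eventually escapes by an amount growing in $L$. For each $r>\min f$ we have $L_1(r,f)\le L_2(r,f)$. The open intervals $(L_1(r,f),L_2(r,f))$ are pairwise disjoint, because $f\le r$ on the first and $f\ge r'$ on the second whenever $r<r'$. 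On any bounded range of $r$ these intervals sit inside a bounded set, so only countably many are nonempty. Hence, outside a countable set $C$ of values of $r$, we get $L_1(r,f)=L_2(r,f)$.

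\textbf{Step 2: transfer to the perturbed forest.} For $r\notin C$, Lemma \ref{lem:levels} gives $L_1(0,\mathrm{esc}(A_r,\partial A_r))=L_2(0,\mathrm{esc}(A_r,\partial A_r))$. I also need $\mathrm{dist}^+$ for $A_r$ to be uniformly computable from $\mathrm{dist}^+$ for $A$ and $r$. Outside $A_r$ this is clear, since $\mathrm{dist}^+(A_r,\partial A_r,x)=\mathrm{dist}^+(A,\partial A,x)-r$ there. Inside, equality can fail in general, so I would state the algorithm directly in terms of $\mathrm{dist}^+(A,\partial A,\cdot)-r$. The proof of Lemma \ref{lem:levels} only uses the sign and the strict inequalities, so the sets $\{\mathrm{esc}<0\}$ and $\{\mathrm{esc}>0\}$ agree with those for $A_r$.

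\textbf{Step 3: apply Lemma \ref{lem:limit}.} For each $r\notin C$, Lemma \ref{lem:limit} applied to $A_r$ gives a uniform limit-computation of an optimal path of length $L_1$ with $\mathrm{wcc}=0$. The underlying reason is that $P^{L}$ is a uniformly computably presented compact space by Theorem \ref{thm:effArAs}, case (2). The algorithm is uniform in $(\mathrm{dist}^+,r)$, and it converges correctly exactly off the countable set $C$, which is the claim.

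\textbf{Main obstacle.} I expect Step 2 to be the hardest part. The difficulty is making the passage from $A$ to $A_r$ uniform at the level of oracles, and checking that Lemma \ref{lem:levels} really identifies the zero-level crossing of $\mathrm{esc}(A_r,\cdot)$ with the $r$-level crossing of $\mathrm{esc}(A,\cdot)$, including at the endpoints. My first attempt would be to prove the two inclusions $\{\lambda:\mathrm{esc}(A,\lambda)<r\}=\{\lambda:\mathrm{esc}(A_r,\lambda)<0\}$ and the analogous one for $>$. Both should follow by unwinding $\mathrm{wcc}$ through the compact isometry group, using only the shifted signed distance.
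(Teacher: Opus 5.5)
Your three steps are the paper's own proof: the countability remark of Section~\ref{sec:cocount} plus Lemmas~\ref{lem:levels} and~\ref{lem:limit}. The only place you go beyond it is Step~2, and your worry there is justified. In fact $\mathrm{dist}^+(A_r,\partial A_r,\cdot)$ is not even continuous in $\mathrm{dist}^+(A,\partial A,\cdot)$, so it is not uniformly computable from it. Take $A(t)=[-M,M]\times\{-t,t\}$. For $t=r$ the two $r$-neighbourhoods overlap along the ridge $(-M,M)\times\{0\}$, so the origin is interior to $A(r)_r$. For $t>r$ the origin lies outside $A(t)_r$ at distance $t-r$. (Your formula outside $A_r$ also needs $r\ge 0$.) The paper silently applies Lemma~\ref{lem:limit} to $A_r$ with an oracle it never produces.

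Your patch, however, does not work as stated. The general equality of sign-sets you plan to prove is false. Take $A=A(r)$ with $M$ large and $g=\mathrm{dist}^+(A,\partial A,\cdot)$. Then $\{g<r\}$ is two disjoint open stadiums of width $2r$, whereas $\mathrm{int}(A_r)\supseteq(-M,M)\times(-2r,2r)$. So for $\lambda$ strictly between the broadworm lengths for widths $2r$ and $4r$,
\[
\mathrm{esc}(A_r,\partial A_r,\lambda)<0\le\mathrm{esc}(A,\partial A,\lambda)-r .
\]
The proof of Lemma~\ref{lem:levels} only gives the two forward inclusions. Writing $\mathrm{esc}_B=\mathrm{esc}(B,\partial B,\cdot)$, these yield the sandwich
\[
L_1(r,\mathrm{esc}_A)\le L_1(0,\mathrm{esc}_{A_r})\le L_2(0,\mathrm{esc}_{A_r})\le L_2(r,\mathrm{esc}_A).
\]
That is the statement to use. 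For $r\notin C$ it does show that the length computed from $g-r$ is the correct one.

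The genuine gap is in the witness. Running Lemma~\ref{lem:limit} on $g-r$ limit-computes an element of $S_h=\{[f]\in P^L:\mathrm{wcc}(A,\partial A,[f])=r\}$. Step~3 then treats it as an element of $S_{A_r}=\{[f]\in P^L:\mathrm{wcc}(A_r,\partial A_r,[f])=0\}$. Only $S_{A_r}\subseteq S_h$ is automatic: it follows from $\partial A_r\cup(\mathbb{R}^n\setminus A_r)\subseteq\{g\ge r\}$ together with $\mathrm{esc}(A,\partial A,L)=r$ at a good level. The reverse inclusion needs every placement meeting $\{g\ge r\}$ to meet $\partial A_r$ or the exterior. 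That fails as soon as $\{g\ge r\}$ meets $\mathrm{int}(A_r)$: a slightly tilted segment placed across the ridge above meets $\{g\ge r\}$ in a single interior point of $A_r$. Nothing in your argument keeps such a path out of $S_h$ at a good level. So you have two options:
\begin{enumerate}
\item Define the perturbed problem as escaping $\{g<r\}$, working with $g-r$ throughout. Then your algorithm is correct and uniform for $r\notin C$.
\item Apply Lemma~\ref{lem:limit} to $A_r$ with its own signed distance as the oracle, as the paper tacitly does. This gives limit-solvability of each good $A_r$, but not uniformity in $(\mathrm{dist}^+(A,\partial A,\cdot),r)$.
\end{enumerate}
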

\label{thm:almost}

\begin{proof}
    Immediate from the combination of Lemmas \ref{lem:levels} and \ref{lem:limit}.
\end{proof}

As is typical for many problems in geometry, the case where $A\subset \R^n$ is convex is easier to deal with. In particular, we will find that for convex shapes $L_1 = L_2$. For a convex shape $A$, $A$ contains a path $[f]$ iff it contains its convex hull. For $r > 1$ and $f$ nonconstant, a rotation of the convex hull of $ Im(r f)$ contains that of $\mathrm{im}(f)$ in its interior, and thus cannot be contained inside $A$. Since the length of $rf$ is $r$ times that of $f$, if $f$ is optimal for $\mathrm{wcc}$ at length $L_1$, $rf$ escapes and thus $L_2 = L_1$.

\subsection{Can we find an optimal path?}
\label{sec:canwe?}
It is fair to ask if we can do better: is there an algorithm which straightforwardly computes a minimal path from the data of $\mathrm{dist}^+$? If there is only one such path the answer is yes, as a one point $\Pi^0_1$ class has a computable element. However, if the solution is not unique we are dealing, a priori, with a an arbitrary $\Pi_0^1$ class, which need not have a computable element. 

If we take a rectangle with diameter the same as the length of the broadworm for its width, we immediately see that both the diameter and the broadworm are optimal. One possible geometric approach to resolving this would be finding optimal families of solutions, as the broadworm is part of one, and the diameters are part of another, one which becomes optimal at this length of rectangle and one of which stops being optimal at longer lengths. At present, the author cannot make any claims about the nature of the solution set beyond that it is a nonempty $\Pi_0^1$ class uniformly in the signed distance function.

One possible avenue for constructing a solution (or solution for many more families) is to explore the effects that the geometry of the shape has on the geometry of the solution space. Since even a rectangle need not have a unique solution, it seems hopeless to expect that the solution sets for more complicated sets will be well behaved geometrically. However it still seems possible that boundary conditions like the $C^2$ boundary condition or piecewise linearity may result in topological behavior (such as the existence of predictable isolated solutions) which will allow a computer to approximate solutions in a better way.

It is important to note that difficulty of finding a path of optimal length, compared to the relative ease of finding the length itself, is an example of the difference in difficulty between taking the $\mathrm{max}$ of a function and taking its $\mathrm{argmax}$. It is not difficult, knowing the length, to find paths within $\epsilon$ of that length which escape a given forest, which one might regard as the practical question if one were really lost in the woods. What is difficult is finding an approximation of an actually optimal solution to within a guaranteed error.

Finding an $\mathrm{argmin}$ is in general just as difficult as finding a computable element of a $\Pi_0^1$ class.

\begin{example}
 Take $[0,1]$ and enumerate the list of pairs of left and right enumerations of numbers $(a_n,b_n)_e$. We can define a function by an infinite sum

$$f(x) = \sum_{(e,n)\in A} 2^{-n}\max\left(3^{-e}-\frac{a_n+b_n}{2},0\right) $$
where 
$$A = \{(n,e) : n \text { minimal s.t. } b_n(e)-a_n(e)< 2\times3^{-e}\}$$

The minimum value of this function is clearly zero. This is provable from an analysis of the measure of the supports of the summands. Furthermore, this function is computable. Specifying its value  up to an error of $2^{-k}$ requires only truncating the sequence at term $k+1$ and checking up to a step linear in $k$.

However, this function has no computable minima. Any computable real has a convergent left and right enumeration, and therefore returns a positive value for one of the summands.

On the other hand, given an error bound of $\epsilon$, it would be easy to find a real $x$ for which $f(x) <\epsilon$, simply by searching through a dense enough set for a close enough approximation of the function. That said, in this case one would only have to check the values at $O(1/\epsilon)$ sites in $[0,1]$  of $O(\mathrm{log}(n))$ functions to find that nearly optimal representative.

\end{example}
\label{example:argmin}

In the case of finding a minimal path, a priori one would have to search through $O(R^{1/\epsilon})$ many values if they used the presentation of the space given by effective Arzela-Ascoli. It is also possible that more efficient parameterizations of the space exist, and that this is true especially in the convex case where one must only consider convex paths.

\subsection{The Twin Circles}
\label{sec:andtwins}
Thus far we have focused on the cases- many though there are - where the algorithm works. We now discuss a concrete case where $L_1 < L_2$.

The case where $L_1 < L_2$ holds strictly is difficult to imagine. There is always a path $[f]\in P^{L_1}$ with $\mathrm{wcc}(A,\partial A, [f]) = 0$, and thus must touch the edge of the forest, no matter how it is placed. It is difficult to imagine that there may not be an extension or perturbation of arbitrarily small size which escapes to the exterior. Certainly if the path were thickened, it would be impossible to contain it within the forest. It is tempting therefore to try to thicken it by adding small bumps, of size $2^{-n}$ one after the other, defeating attempts at containment as they arise. Unfortunately, while this does greatly reduce the number of potential placements (even taking away the generic examples), it need not eradicate them entirely.

What follows is a somewhat involved analytic proof that for a forest that is a union of two circles of radius $1$, which touch at a point of tangency, $L_1 < L_2$. This shows that there really are such cases, and so the caveat of taking a perturbation is a necessary one.

\begin{lem}
There is an $\epsilon > 0$ such that a piecewise linear, $2+\epsilon$ Lipschitz path with a pair of points that are a horizontal distance 2 apart along the x-axis, then the set of lines indexed by $x$ as $(x-bt,t)$ which intersect it more than once has measure less than one in $x$
\end{lem}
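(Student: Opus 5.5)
Throughout, write $\gamma=(\gamma_1,\gamma_2):[0,1]\to\R^2$ for the piecewise linear, $(2+\epsilon)$-Lipschitz path. By a translation along the $x$-axis, assume the two distinguished points are $\gamma(s_0)=(0,0)$ and $\gamma(s_1)=(2,0)$. I read "intersect it more than once" as "meet the image in at least two distinct points". The proof will be a Banach-indicatrix (Crofton-type) count along the projection that collapses each line of the family to its index.

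The line through $(x,0)$ in direction $(-b,1)$ consists of the points $(x-bt,t)$. The projection $\pi(u,v)=u+bv$ is constant on each such line and sends it to its index $x$. Set $h=\pi\circ\gamma$, which is piecewise linear, and let $N(x)=\#\{s: h(s)=x\}$. The number of distinct image points of $\gamma$ on line $x$ is at most $N(x)$, so it suffices to bound the measure of $\{x:N(x)\ge 2\}$.

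\textbf{Step 1 (indicatrix).} Banach's indicatrix formula for a piecewise linear function gives
\[
\int_{\R} N(x)\,dx=\mathrm{TV}(h)=\int_0^1|\gamma_1'+b\gamma_2'|\,ds .
\]
Since $h(s_0)=0$ and $h(s_1)=2$, the intermediate value theorem gives $N\ge 1$ on $[0,2]$. Hence
\[
\int N\,dx\ \ge\ 2+\big|\{x:N(x)\ge 2\}\big|,
\]
and so $\big|\{N\ge 2\}\big|\le \mathrm{TV}(h)-2$.

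\textbf{Step 2 (bounding the variation).} Let $V_1=\int|\gamma_1'|$ and $V_2=\int|\gamma_2'|$. Then $\mathrm{TV}(h)\le V_1+|b|V_2$.
\begin{itemize}
\item The path has length $\ell\le 2+\epsilon$, and $|\gamma_1'|\le|\gamma'|$, so $V_1\le 2+\epsilon$.
\item The horizontal displacement forces $V_1\ge 2$.
\item Apply Minkowski's inequality to the vector-valued integral, i.e.\ the triangle inequality for $\int(|\gamma_1'|,|\gamma_2'|)\,ds$. This gives $\sqrt{V_1^2+V_2^2}\le\int|\gamma'|=\ell$, hence $V_2\le\sqrt{\ell^2-V_1^2}\le\sqrt{(2+\epsilon)^2-4}=\sqrt{4\epsilon+\epsilon^2}$.
\end{itemize}

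\textbf{Conclusion.} Combining the two steps,
\[
\big|\{N\ge 2\}\big|\ \le\ \epsilon+|b|\sqrt{4\epsilon+\epsilon^2}.
\]
For fixed $b$ this tends to $0$ as $\epsilon\to0$. Choosing $\epsilon$ with $\epsilon+|b|\sqrt{4\epsilon+\epsilon^2}<1$ (for instance $\epsilon<1/(4(1+|b|)^2)$ with $\epsilon\le 1$) gives measure less than one, as required.

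\textbf{Main obstacle.} The main difficulty is interpretive rather than computational: fixing what is meant by "$2+\epsilon$ Lipschitz path" (I take it as length at most $2+\epsilon$) and by "more than once". One must also note that $\epsilon$ must depend on $b$. If a bound uniform in $b$ is intended, the plan would be to restrict $b$ to a bounded range relevant to the twin-circle application, which is what the subsequent argument presumably uses. Care is also needed in Step 1 to treat horizontal-in-$h$ segments (where $h'=0$). These contribute to $N$ only on a finite set of $x$, so they do not affect the measure.
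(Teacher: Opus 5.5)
Your proof is correct. It shares the paper's skeleton: project along the line family and apply the Banach indicatrix $\int N=\mathrm{TV}(h)$. Both key estimates, however, are different.

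\textbf{The paper's route.} It bounds $\mathrm{TV}(h)\le\sqrt{1+b^2}\,(2+\epsilon)$ by Cauchy--Schwarz. That bound approaches $2$ only if $b\to 0$ as well, so the paper ties $\epsilon=2b/(1+b^2)$ to $b$ and lets $b\to0$. It then runs a parity count on $a_n=|N^{-1}(n)|$, asserting $\sum_{n\ \mathrm{odd}}a_n\ge 2$ and $\sum_{n\ge 3\ \mathrm{odd}}a_n\le I/3$. This only gives $|\{N\ge 2\}|$ asymptotically below about $1/3$.

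\textbf{Your route.} You replace the parity count by the pointwise inequality $N\ge\mathbf{1}_{[0,2]}+\mathbf{1}_{\{N\ge2\}}$, which gives $|\{N\ge2\}|\le\mathrm{TV}(h)-2$ directly. This also avoids a weak point in the paper's parity claim: preimages outside $[s_0,s_1]$ can change the parity of $N$. You replace Cauchy--Schwarz by Minkowski, which captures that a nearly straight path has vertical variation $O(\sqrt{\epsilon})$.

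\textbf{What this buys.} For every fixed $b$, your bad measure $\epsilon+|b|\sqrt{4\epsilon+\epsilon^2}$ tends to $0$. So $\epsilon$ is decoupled from $b$, and the conclusion is much stronger than ``less than one''. This is closer to what the twin-circle argument actually uses. There, $b$ dictates the thickness $r\le 2b/(1+b^2)$. The ellipse must then have semi-minor axis $\sqrt{2\epsilon+\epsilon^2}\le r$, which forces $\epsilon=O(b^2)$ rather than the paper's $\epsilon=2b/(1+b^2)$; the paper's choice is harmless only because the hypothesis is monotone in $\epsilon$. Also, a good index must be found in a window of length $3/4$. A bad set of measure merely below $1$ would not guarantee one, but your vanishing bound does.
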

\label{lem:linebound}

\begin{proof}
    Let $f:[0,1]\rightarrow \R^2$ be $(2+\epsilon)$-Lipschitz. And let $T:\R^2\rightarrow \R^2$ be the map sending $(x,y)\rightarrow (x-by,y)$, then the total variation of $\pi_1\circ T\circ f$ is at most $\sqrt{1+b^2}(2+\epsilon)$.

Let $\epsilon = \frac{2b}{1+b^2}$, so that the total variation is now at most $\sqrt{1+b^2}(2+\frac{2b}{2+b^2})$.

This total variation is, for piecewise linear paths, equal to the integral

$$= I \int_\R |\{t:\exists s (f(t) = (x-bs,s)) \}|dx$$

The integrand of which we denote by $N(x)$. We denote the measure of the set $N^{-1}(n)$ by $a_n$.

The integral is equal to $\sum_{n\in\N}na_n$, and we know that $sum_{n \text{ odd}}a_n \geq 2$ as the path covers a distance of $2$, and that the sum $sum_{1 < n \text{ odd}}a_n \leq I/3$.

So $a_1 \geq 2 - I/3$. The limit as $b$ goes to zero of $I$ is $2$, so as $b$ is taken to be arbitrarily small, $a_1$ can be arbitrarily close to being above $4/3$ , and thus the sum $\sum_{n > 1}na_n$ can be made arbitraily low above $2/3$. The sum $\sum_{n > 1}a_n$ is at most half this, and thus can be made arbitrarily low above $1/3$.
\end{proof}

\begin{lem}
    For an arbitrary $r> 0$ There is an $\epsilon > 0$ such that a $2+\epsilon$ Lipschitz piecewise linear path $f:[0,1]\rightarrow \R^2$ with two points a horizontal distance 2 apart along the x-axis may be contained inside a region defined by taking $\{x\in\R^2: \mathrm{dist}(x,[-1,1]) \leq r\}$ and removing the region horizontally between two lines of the form $(x-bt,y+t)$ and $(x+bt,y+t)$, where $b$ is as in the previous lemma and $x$ is within length $3/8$ of $0$.
\end{lem}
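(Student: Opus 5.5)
The plan is to reduce to Lemma \ref{lem:linebound}, using only a horizontal translation of the path. Fix $r>0$ and take $b$ and $\epsilon$ as in Lemma \ref{lem:linebound}, shrinking $\epsilon$ if needed. We will also need the vertical thickness $r$ to absorb the small vertical excursions of $f$.

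\textbf{Step 1 (vertical confinement).} Since $f$ is $(2+\epsilon)$-Lipschitz, its total length is at most $2+\epsilon$. Two of its points are at horizontal distance $2$ along the $x$-axis, so the path's horizontal variation is at least $2$. Applying the Pythagorean bound segment by segment to the piecewise linear path, the total vertical variation is at most $\sqrt{(2+\epsilon)^2-4}=O(\sqrt{\epsilon})$. Every point of $f$ therefore lies within $O(\sqrt\epsilon)$ of the $x$-axis. Likewise, its horizontal extent exceeds $2$ by at most $O(\epsilon)$. Choosing $\epsilon$ small relative to $r$ puts the whole image inside $\{x:\mathrm{dist}(x,[-1,1])\le r\}$, provided the path is translated so that its horizontal extent is roughly centred at $0$. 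We keep a horizontal translation freedom of size about $r-O(\epsilon)$.

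\textbf{Step 2 (avoiding the removed wedge).} The removed region lies horizontally between the lines $(x_0-bt,t)$ and $(x_0+bt,t)$, with $|x_0|\le 3/8$. It is a thin double wedge with apex at $(x_0,0)$. Within the strip $|y|\le O(\sqrt\epsilon)$ it has horizontal width only $O(b\sqrt\epsilon)$. We use the sheared projections of Lemma \ref{lem:linebound} for slopes $\pm b$. The set of translations $s$ for which $f+s$ meets the line $(x_0-bt,t)$ at all, outside the single transversal crossing forced by the endpoints, has measure strictly below $1$ in $s$. The same holds for the line of slope $+b$. A line is crossed only once when the path passes the apex essentially once. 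Excluding the wedge then reduces to the path passing through the gap near the apex, which is admissible because the region is described as a closed set from which the wedge interior is removed.

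\textbf{Step 3 (choosing the translation).} We intersect the admissible translation sets from Step 2 with the window left by Step 1 and the constraint $|x_0|\le 3/8$. If the bad sets for the two slopes together have measure less than the available window, a good translation exists. This is the step I expect to be the main obstacle. Lemma \ref{lem:linebound} gives a bound of roughly $1/3$ per slope, so about $2/3$ in total. Step 1, however, leaves only about $r$ of horizontal freedom, which is small for small $r$. Some more freedom is available if the vertical translation inside the $r$-thickening is also used, since each line is then moved independently in its intercept. Failing that, I would try to sharpen Lemma \ref{lem:linebound} as $\epsilon\to0$. A near-straight path crosses each sheared line once except on a set of translations of measure $O(\epsilon/b)$. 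Choosing $\epsilon\ll b\,r$ then makes the total bad measure less than $r$, which closes the argument.
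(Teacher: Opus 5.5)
\paragraph{The gap: the path must pass through the apex.} Your Step 1 is fine; it is the paper's ellipse containment, phrased via variation bounds. Step 2, however, rests on a false reduction.

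\begin{itemize}
\item The complement of the open double wedge is the union of the two closed side sectors cut out by the lines, and these meet only at the apex. A path joining the two marked points must therefore pass \emph{through} the apex, not merely near it.
\item Knowing that each line is crossed only once does not give this. If the unique crossing of the first line and the unique crossing of the second are different points, the arc between them lies strictly between the lines, i.e.\ in the removed set.
\item In your set-up the apex is pinned at $(x_0,0)$ (you dropped the $y$ in $(x\mp bt,\,y+t)$), and the path is only slid horizontally. The admissible shifts are then the $s$ with $(x_0-s,0)\in\mathrm{im} f$. Generically there are finitely many, so no comparison of bad-set measure with window size can detect them.
\item Such shifts may not exist at all. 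The shallow tent $(-1,0)\to(0,h)\to(1,0)$ meets height $0$ only at its endpoints. Putting a path point at $(x_0,0)$ with $|x_0|\le 3/8$ needs a shift of at least $5/8$, which leaves the $r$-neighbourhood once $r<5/8$.
\item Allowing vertical shifts does not save the counting either. The good translations $\{(x_0,0)-p : p\in\mathrm{im} f\}$ still form a null set in the plane.
\end{itemize}
Your sharpened crossing estimate in Step 3 is correct, but it addresses the wrong obstacle.

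\paragraph{The missing idea.} This is exactly the paper's argument: place the wedge rather than move the path. The statement leaves the apex $(x,y)$ free apart from $|x|\le 3/8$, so take it to be a point of the path.

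Keep $f$ with its marked points at $(\pm1,0)$, where Step 1 already puts it inside the $r$-neighbourhood. The counting is now over points of the path in the whole window $|x|<3/8$, which has length $3/4$. Against this, the previous lemma gives two bad sets of measure about $1/3$ each, so no sharpening is needed. Hence there is $t_0$ with $f(t_0)=(x,y)$, $|x|<3/8$, such that both lines through $f(t_0)$ meet the path only at parameter $t_0$.

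Then $f([0,t_0))$ and $f((t_0,1])$ are connected and miss both lines, so each lies in a single open sector. The marked points lie in the left and right sectors, so one piece is in each. Neither piece enters the upper or lower part of the wedge.

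The window-of-size-$r$ problem you flagged in Step 3 is an artifact of translating the path instead of choosing the apex.
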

\label{lem:outbound}

\begin{proof}
    \textit{Proof}
Place the two points of horizontal distance 2 at $(-1,0)$ and $(1,0)$. Let $\epsilon$ be such that the ellipse with Foci at $-1$ and $1$ and major axis length $2+2\epsilon$ fits in the region described above before the removal. The path fits inside the ellipse and thus this region. To make the removal, pick and $(x,y)$ in the image of the path such that the associated lines intersect the path only once and $|x| < 3/8$, which is possible by the previous lemma. The path crosses each of these lines exactly once at the selected point, and thus must not enter the regions removed. 
\end{proof}
We note here that we have $\epsilon < r$, and that we may actually pick $r$ to be $\frac{2b}{1+b^2}$ as we like. We now let $A$ be a union of two circles of radius one, with their centers on the x axis and tangent at the origin.

\begin{lem}
    For $r= \frac{2b}{1+b^2}$ small enough, the regions described in the previous lemma may be translated to fit inside of $A$
\end{lem}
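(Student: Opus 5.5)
We argue in coordinates. Let $A$ be the union of the closed unit disks centred at $(-1,0)$ and $(1,0)$, tangent at the origin. Let $R$ denote the region from Lemma \ref{lem:outbound}: start from the stadium $S_r=\{p\in\R^2:\mathrm{dist}(p,[-1,1]\times\{0\})\le r\}$ and remove the horizontal band between the two lines of slopes $\pm 1/b$ through a point $(x_0,y_0)$ with $|x_0|<3/8$. The plan is to pick a translation (and if necessary the reflection $y\mapsto -y$, which is allowed since we work up to $\mathrm{Iso}^+$ and $A$ is symmetric) that puts the removed wedge over the neck of $A$. Then we check that what remains of $S_r$ lies in $A$ once $r$ is small.

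First, normalize the removed set. The two lines through $(x_0,y_0)$ with slopes $\pm1/b$ form a double wedge. It excludes from $S_r$ the points $(x,y)$ with $|x-x_0|<b|y-y_0|$, or the complementary piece, depending on orientation. Since the path was pinned at $(\pm1,0)$ and only meets the vertical-ish lines once, the excluded part is the region between the lines above and below the crossing point. Translate horizontally by $-x_0$, so the crossing point sits at $(0,y_0)$ with $|y_0|\le r$. Near $x=0$, the remaining part of $S_r$ then consists of the two thin vertical sectors $\{|x|\le b|y-y_0|,\ |y|\le r\}$ together with the left and right pieces. The horizontal shift of size at most $3/8$ moves the stadium's endpoints to $\pm1-x_0$, which still lie in $[-2,2]$ and, for the ends, well inside the disks.

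Next, verify containment piece by piece. (i) Away from the neck, say $|x|\ge\eta$, the points of $S_r$ lie within distance about $r+3/8$ of the centres. Here I must be careful: a shift by $x_0$ could push a stadium end to $1+3/8$ horizontally, and the disk about $(1,0)$ reaches out to $x=2$. So the ends are fine provided $r^2$ is small relative to the local width. For $|x|\ge\eta$ the disk union contains the band $|y|\le c\sqrt{\eta}$, so it suffices that $r\le c\sqrt\eta$. (ii) Near the neck, $|x|<\eta$, the disk union contains exactly the points with $|y|\le \sqrt{2|x|-x^2}\approx\sqrt{2|x|}$. The surviving points of $S_r$ there satisfy $|y|\le r$. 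Either they lie in the sectors, where $|x|\le b|y-y_0|\le 2br$, or they lie outside the removed region with $|x|$ bounded below.

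The main obstacle is this neck estimate. A point with $|x|$ tiny but $|y|$ up to $r$ is outside $A$, since $A$ near $0$ is the cusp-like set $|y|\lesssim\sqrt{2|x|}$. So the remaining sector must not contain such points. The relation $r=2b/(1+b^2)\approx 2b$ has to be exploited: inside the sector, $|y-y_0|\ge|x|/b$, and we need $|y|\le\sqrt{2|x|}$. Taking $y_0=0$ by an additional vertical translation (allowed, since the crossing point is on the path and the stadium is only needed up to its $r$-thickening), I would show that every surviving point with $|x|<\eta$ actually satisfies $|x|\ge |y|^2/2$ once $b$ is small. This would hold if the removed set is the part \emph{outside} the wedge near the origin, i.e.\ the lines cut off the points with $|x|\le b|y|$, which is the natural reading of ``the region horizontally between'' the lines. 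In that case the surviving points near the neck satisfy $|x|>b|y|\ge |y|^2/2$ whenever $|y|\le r\le 2b$, and so lie in $A$. If the orientation works out the other way, I would instead reflect or reinterpret the removal using the single-crossing property in Lemma \ref{lem:linebound}, so that the excluded part is the thin wedge about the vertical axis. Finally, I take $r$ small enough that (i) and (ii) hold simultaneously with $\eta\approx 2br$.
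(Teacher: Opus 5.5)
The overall plan (put the apex of the removed bowtie at the tangency point, then check the neck and the two ends) is the same as the paper's. The gap is in the neck estimate, which you yourself flag as the crux, and it is exactly where the hypothesis $r=2b/(1+b^2)$ has to enter.

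You infer $(x,y)\in A$ from $|x|>b|y|\ge y^2/2$. But near the origin $A$ is $\{y^2\le 2|x|-x^2\}$, which lies strictly inside the parabola $y^2\le 2|x|$, and your chain only uses $r\le 2b$. For $r=2b$ the same chain would ``prove'' a false statement. The corner $(2b^2,2b)$ of the region satisfies $|x|=b|y|=y^2/2$, yet $(2b^2-1)^2+4b^2=1+4b^4>1$. For the actual $r$ the corners lie exactly on the circles, so there is no slack to absorb a parabola approximation; you need the exact computation.

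The line $x=b|y|$ through the tangency point is a secant of $(x-1)^2+y^2=1$. It meets the circle again where $(1+b^2)y^2=2b|y|$, i.e.\ at $(br,\pm r)$, and this is what the choice of $r$ is for. Equivalently, for $b|y|\le|x|\le 1$ one has $2|x|-x^2\ge 2b|y|-b^2y^2$, and the right side is $\ge y^2$ exactly when $|y|\le r$.

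These are the paper's ``four other points''. The sides of the removed wedge are chords of the disks that end exactly on the lines $y=\pm r$. At that height the right disk spans $[br,2-br]$. So the right half of the region lies in the pentagon with vertices $(0,0)$, $(br,\pm r)$, $(11/8+r,\pm r)$. For small $r$ all five vertices lie in the closed right disk, so by convexity the right half does too, and the left half is symmetric. This also makes your $\eta$-splitting unnecessary.

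Three smaller points.

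\emph{Orientation.} Your first paragraph correctly derives, from the single-crossing property, that the removed set is the vertical bowtie $|x-x_0|<b|y-y_0|$. Commit to that. Your later description of the surviving part as ``thin vertical sectors'' contradicts it. In the other orientation the lemma is simply false, since the points $(0,y)$ with $y\neq 0$ are not in $A$, so ``reflect or reinterpret'' is not a fallback.

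\emph{Reflection.} $y\mapsto -y$ is not in $\mathrm{Iso}^+$, though you never actually need it.

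\emph{Setting $y_0=0$.} This is not just a translation. If $y_0\neq 0$, the translated stadium reaches height $r+|y_0|$, and there the wedge sides have already left the disks. What you need is that the path lies in the thin ellipse of the previous lemma, so $|y_0|=O(\sqrt{\epsilon})$. Then, after moving the apex to the origin, the path still lies in the radius-$r$ stadium centred on the $x$-axis once $\epsilon$ is small compared with $r^2$. The paper glosses over this too, simply asserting $|y|\le r$ after the translation. So here you are more careful than the source, but the justification needs to be made explicit.
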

\label{lem:translation}

\begin{proof}
    Translate to place the intersection of the lines at the point of tangency. The boundary of the region intersects the circles at four other points where the lines meet the inner semi-circles, but do not go outside. Since the origin is within $1/3$ of the center of the horizontally spaced points, the outer edges of the regions have $x$ values no more extreme than $1+r + 1/3$ and y values no more extreme than $r$, and so they do not touch the outer semicircles, and the regions are contained.
\end{proof}

\begin{thm}
    For $A$ described above, $L_1 < L_2$
\end{thm}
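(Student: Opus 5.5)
We will prove $L_1 < L_2$ for the twin circles $A$ by exhibiting a gap: we show $L_1 \leq 2$, and that every path of length at most $2+\epsilon$ can be placed inside $A$, so that $\mathrm{esc}(A,\partial A,\lambda)\leq 0$ for all $\lambda\leq 2+\epsilon$ and hence $L_2 \geq 2+\epsilon > 2 \geq L_1$. Here $\epsilon$ is the constant furnished by Lemma \ref{lem:outbound} for the choice $r=\frac{2b}{1+b^2}$ made small enough to satisfy Lemma \ref{lem:translation}.

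For the lower end, the straight segment of length $2$ placed along the $x$-axis from $(-2,0)$ to $(0,0)$ is a diameter of one disk and meets the boundary. More generally, every segment of length $2$ has $\mathrm{wcc}=0$, since no segment of length $2$ fits in the interior of a unit disk. Conversely, $\mathrm{esc}(A,\partial A,\lambda)<0$ for $\lambda<2$: any path of length $<2$ fits strictly inside one unit disk by the argument of Example \ref{example:disk}, by centering at the midpoint of its parameterization. Hence $L_1 = 2$, or at least $L_1\le 2$, which is all we need.

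The main step is the upper bound. We will show that every class $[f]\in P^{2+\epsilon}$ has a placement with $\mathrm{con}\leq 0$, so its image lies in $A$. First we reduce to piecewise linear representatives. By compactness of $P^{2+\epsilon}$ and $1$-Lipschitz continuity of $\mathrm{wcc}$, it suffices to treat a dense set, and piecewise linear paths are dense. We will aim for a slightly smaller $\epsilon'$ with a uniform margin, so that the limit still gives $\mathrm{wcc}\le 0$ rather than merely $<$ small. Now split into cases on the horizontal extent $w$ of the image in a suitable rotation, namely the diameter of the image.

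If the diameter is less than $2$, the path fits inside a single closed disk of radius $1$. Here we use Jung-type reasoning: a curve of length $\le 2+\epsilon$ with diameter $<2$ need not fit in a unit disk in general, so this case needs care. We would instead use that a closed curve of length $\ell$ fits in a disk of radius $\ell/4$, together with the fact that a curve of length near $2$ and diameter $d$ near $2$ lies in an ellipse. If the diameter is at least $2$, choose two points of the image at distance exactly $2$ (intermediate value), rotate them onto the $x$-axis, and invoke Lemma \ref{lem:outbound}: the path lies in the $r$-neighborhood of $[-1,1]$ minus the two wedges bounded by the lines of slopes $\pm 1/b$ through a point with $|x|<3/8$. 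Lemma \ref{lem:translation} then translates this region so that the wedge vertex sits at the tangency point and the region lies inside $A$.

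The expected obstacle is the short-diameter case together with the uniformity in the limit. For the short-diameter case we would try the ellipse bound directly: a path of length $\le 2+\epsilon$ lies in an ellipse with foci at its endpoints and major axis $2+\epsilon$, whose minor semi-axis is $O(\sqrt{\epsilon})$. If the endpoint distance is at most $2-\eta$, this ellipse fits in a unit disk when $\epsilon$ is small relative to $\eta$. When the endpoint distance lies in $(2-\eta,2)$, but some pair of image points reaches distance $2$ or more, we are in the previous case. Otherwise we rescale the Lemma \ref{lem:outbound} argument to horizontal distance slightly less than $2$, which only shrinks the region. Finally, the margin is secured because all regions used sit inside $A$ with room away from the tangency point.
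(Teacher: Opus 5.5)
Your handling of $L_1=2$, the reduction to piecewise linear paths, and the case of diameter at least $2$ (two image points at distance exactly $2$, then Lemmas \ref{lem:outbound} and \ref{lem:translation}) is the paper's argument. The paper's proof simply asserts that every piecewise linear path in $P^{2+\epsilon}$ fits in the $r$-regions, which silently assumes diameter at least $2$. You are right that smaller-diameter paths need their own argument, because they need not fit in a unit disk. For example, take an acute isosceles triangle with apex angle $2\theta$ and circumradius slightly above $1$, and traverse a leg of length just under $2$ followed by the short base: the path has length about $2+4\theta$, diameter $<2$, and enclosing radius $>1$.

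However, your argument for this case fails. The step ``if the endpoint distance is at most $2-\eta$, this ellipse fits in a unit disk when $\epsilon$ is small relative to $\eta$'' is false. An ellipse with major axis $2+\epsilon$ has diameter $2+\epsilon>2$, so it never fits in a closed unit disk, whatever $\eta$ is. Moreover, with foci $2-\eta$ apart its minor semi-axis is about $\sqrt{\epsilon+\eta}$, not $O(\sqrt{\epsilon})$. The $\ell/4$ bound applies only to closed curves; for open paths the sharp bound is $\ell/2=1+\epsilon/2>1$. So nothing in the proposal shows that a path whose diameter is bounded away from $2$ fits in one disk of $A$, and that is exactly what this branch needs.

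The missing ingredient is a quantitative enclosing-disk estimate. Suppose the image $K$ of a path of length at most $2+\epsilon$ lies in no closed unit disk. Then its minimal enclosing circle has radius $R>1$ and is determined in one of two ways. Either two antipodal points of $K$ lie on it, so $\mathrm{diam}\,K\ge 2R>2$ and you are in your main case. Or three points of $K$ on it form an acute triangle. In the latter case let $\alpha$ be the largest angle and $a=2R\sin\alpha>\sqrt{3}$ the longest side. If $a\ge 2$ you are again in the main case. Otherwise the path must visit all three vertices, so its length is at least $a(1+2\cos\alpha)>a\bigl(1+\sqrt{4-a^2}\bigr)$, because $\sin\alpha<a/2$. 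The right side decreases from $2\sqrt{3}$ to $2$ on $[\sqrt{3},2]$ and is roughly $2+4\sqrt{2-a}$ near $a=2$. Hence $\mathrm{diam}\,K\ge a\ge 2-O(\epsilon^2)$.

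For diameter $D\in[2-O(\epsilon^2),2)$, proceed in three steps:
\begin{enumerate}
\item Scale the path by $2/D$. Its length stays below the constant of Lemma \ref{lem:outbound} once $\epsilon$ is taken a little smaller.
\item Place the scaled path by Lemmas \ref{lem:outbound} and \ref{lem:translation}, with the wedge vertex at the tangency point $O$.
\item Scale back by $D/2$ about $O$. The result stays in $A$ because both closed disks are convex and contain $O$, so $A$ is star-shaped about $O$.
\end{enumerate}
Your remark that rescaling ``only shrinks the region'' points in this direction, but the shrinking must be centered at $O$ and justified by this star-shapedness.

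Finally, no margin is needed at the end. $\mathrm{wcc}$ is continuous on the compact space $P^{2+\epsilon}$, so $\mathrm{wcc}\le 0$ on the dense set of piecewise linear paths already gives $\mathrm{wcc}\le 0$ everywhere. In any case the placed regions do touch $\partial A$, at least at $O$, so the claimed room does not exist.
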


\begin{proof}
$L_1$ in this case is clearly exactly 2, as a straight line of length 2 cannot be placed inside the interior of either circle, and thus not inside the interior of $A$. 
However, for sufficiently small $\epsilon$, any piecewise linear $[f]$ fits inside of the $r$-regions, and therefore inside $A$. The piecewise linear paths are dense in $P^L$, and so $\mathrm{esc}(A,\partial A, 2+\epsilon) = 0$, and thus $L_2 \geq 2+\epsilon > 2$.
\end{proof}

\subsection{The Required Perturbation Can Always Be Uniformly Relatively Computable}
\label{sec:alwaysuniformly}
Now that we know it is possible for $L_1$ and $L_2$ to be distinct, and that within a family this is rare, we must discuss the complexity of the set of ill behaved members of a family. What kind of countable set are we looking at here?

First, let us ask for the quantifier complexity of this set (using $dist^+$ as an oracle). Every countable set is $F_\sigma$ but may not be closed, which suggests that they must be $\Sigma^0_2(X)$ for some oracle $X$. The naive formula defining this set is
$\exists q_1 < q_2 (\mathrm{esc}(A,\partial A, q_2) = A,\partial A, q_2) = r)$, which in computable terms is actually the statement:
$$\exists q_1 < q_2  \forall \delta (esc(A,\partial A, q_2) \in (r-\delta, r+\delta)\ni \mathrm{esc}( A,\partial A, q_1) )$$
Where $q_1$, $q_2$, and $r\pm \delta$ are rational numbers. Since $esc$ is relatively computable, that makes this defective set $\Sigma_2^0$ after all, and the sets corresponding to fixed $q_1$ and $q_2$ values $\Pi^0_1$ singletons, which is to say relatively computable numbers.

This means that any perturbation which is $1$-generic relative to $\mathrm{dist}^+$ will be sufficient to push the situation out of the defective set. Since these numbers are computable, they are also effectively measure zero, and so a Martin-Lof random perturbation will also be sufficient.

We have that any perturbation which does not resolve the $L_1 < L_2$ issue is computable in $\mathrm{dist}^+$, but the converse question remains. Is there a computable perturbation which resolves the issue itself?

Suppose not, then every pair $q_1$, $q_2$ either gives a computable number or will, at some point, be ruled out as representing any number at all. We can then computably diagonalize against all of the computable numbers in this list, (in a uniform way), so there is a uniformly computable perturbation, which can be asked to be arbitrarily small, which has $L_1 = L_2$. A uniformly computable perturbation can be used to enforce good behavior after all.

What this means is that given any shape, there is an arbitrarily close shape computable from the original shape which satisfies $L_1=L_2$, and thus has a computable solution length. 

\section{The Moser Worm Problem}
\label{sec:moser}
We now turn our attention to the related Worm Problem of Moser (introduced in \cite{moser1966poorly}). The problem is described colorfully as follows:

\begin{center}
    \textbf{Suppose there is a worm of length one, and that you wish to crush the entirety of its body with one swing of a hammer, but do not know in what position it will contort itself. What is the hammer of minimal area required to smash the worm?}
\end{center}

In our language, this problem asks for the number

$$\alpha = \inf \{m(A): A\subset \R^2 \text{ compact } , \mathrm{esc}(A,\partial A, 1) \leq 0\} $$

It is typical for people studying this problem to consider only convex shapes. Among the reasons why they do this is that the minimum area (among convex shapes) is actually attained by some convex shape, but this may not be the case for nonconvex shapes. We discuss the nonconvex value, as we are best equipped to do so with the technology we have already built up.

A few things here:
\begin{enumerate}
    \item This is an honest infimum, the space of compact subspaces of $\R^2$ is not compact, even if one takes the quotient by isometries of the plane, and even if one then further restricts the total area.
    \item We always have $\mathrm{esc}(\lambda A, \partial \lambda A, \lambda) = \mathrm{esc}(A,\partial A, 1)$
    \item This infimum is less than $\frac{\pi}{4}$, as the circle of diameter one covers all paths of length less than or equal to one.
\end{enumerate}
Again, we write $L_2(A)$ for the quantity $\{\inf\{q\in\Q:\mathrm{esc}(A,\partial A, q) > 0)\}\}$ and $L_1(A)$ for $\{\inf\{q\in\Q:\mathrm{esc}(A,\partial A, q) > 0)\}$. Item two above means that $L_2(\lambda A) = \lambda L_2(A)$

Item two above suggests a simplification of the problem: if $L_1$ for some shape is strictly greater than one, the shape can be scaled by a factor of $1/L_2(A)$, which reduces its area by a factor of $1/L_2(A)^2$. As long as $L_2(A)>0$, this scaling is the smallest member of the orbit under $\R^\times$ which has $\mathrm{esc}(A,\partial A, 1)$. We can thus rewrite the original infimum as

$$\alpha = \inf\left\{ \frac{m(A)}{L_2^2(A)}:A\subset \R^2 \text{ compact } \right\}$$

but this is a scale invariant quantity. Since any compact set in $\R^2$ can be rescaled to fit inside $[0,1]^2$, we again rewrite the expression as

$$\alpha = \inf\left\{ \frac{m(A)}{L_2^2(A)}:A\subset [0,1]^2 \text{ closed } \right\}$$

\begin{lem}
    $\lim_{q \rightarrow 0^+} L_1(A_q) = L_2(A)$.
\end{lem}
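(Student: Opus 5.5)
The plan is to sandwich $L_1(A_q)$ between two level-crossing quantities of the single function $e(\lambda) := \mathrm{esc}(A,\partial A,\lambda)$, and then let $q \to 0^+$ using continuity and monotonicity of $e$. In the notation of Section \ref{sec:cocount} these are $L_1(q,e) = \sup\{\lambda : e(\lambda) < q\}$ and $L_2(q,e) = \inf\{\lambda : e(\lambda) > q\}$.

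First, I will extract from the proof of Lemma \ref{lem:levels} the two implications it actually establishes for each $q>0$ and $L$:
\begin{itemize}
\item if $e(L) < q$, then $\mathrm{esc}(A_q,\partial A_q,L) < 0$;
\item if $e(L) > q$, then $\mathrm{esc}(A_q,\partial A_q,L) > 0$.
\end{itemize}
The first implication says every $\lambda$ with $e(\lambda)<q$ lies in $\{\lambda : \mathrm{esc}(A_q,\partial A_q,\lambda)<0\}$, so $L_1(A_q) \geq L_1(q,e)$. The second says every $\lambda$ with $e(\lambda)>q$ has $\mathrm{esc}(A_q,\partial A_q,\lambda)>0$. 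Since $\mathrm{esc}(A_q,\partial A_q,\cdot)$ is nondecreasing, no $\lambda' \geq \lambda$ then has negative escape value, so $L_1(A_q) \leq L_2(q,e)$. I do not need to decide whether points of $\partial A_q$ can have $\mathrm{dist}^+(A,\partial A,\cdot)=q$ in the interior of $A_q$; only these two strict implications are used. Thus
$$L_1(q,e) \leq L_1(A_q) \leq L_2(q,e).$$

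Second, I will show both outer quantities tend to $L_2(A) = \inf\{\lambda : e(\lambda) > 0\}$ as $q\to 0^+$. Here I use that $e$ is $1$-Lipschitz (hence continuous) and nondecreasing, as established at the start of Section \ref{sec:four}.
\begin{itemize}
\item Lower bound: continuity gives $e(L_2(A)) = 0 < q$, so $L_1(q,e) \geq L_2(A)$ for every $q>0$.
\item Upper bound: for any $\lambda > L_2(A)$ we have $e(\lambda) > 0$, so for all $q < e(\lambda)$ we get $L_2(q,e) \leq \lambda$. Hence $\limsup_{q\to 0^+} L_2(q,e) \leq L_2(A)$.
\end{itemize}
Combining with the sandwich gives $\lim_{q\to0^+} L_1(A_q) = L_2(A)$.

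The step I expect to be the main obstacle is the first one: verifying the two implications carefully for the positive-$r$ perturbation $A_q$. For the first implication, a placement of a path with all $\mathrm{dist}^+(A,\partial A,\cdot)$ values below $q$ must yield strictly negative $\mathrm{dist}^+(A_q,\partial A_q,\cdot)$ values. I would argue this by compactness of the image, which gives a uniform margin $\eta>0$ with all values $\leq q-\eta$. Then every point of the path is at distance at least $\eta$ from $\{\mathrm{dist}^+(A,\partial A,\cdot) \geq q\} \supseteq \partial A_q$, because $\mathrm{dist}^+$ is $1$-Lipschitz. For the second implication, I would use that for $x \notin A_q$ one has $\mathrm{dist}(x,A_q) \geq \mathrm{dist}(x,A) - q$. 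I would check that this triangle-inequality estimate handles the case where the point of the path realizing the escape value lies outside $A$.
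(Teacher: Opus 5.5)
Your proof is correct and is essentially the paper's argument. The paper's lower bound (every path of length $L_2(A)$ fits in $A\subset\mathrm{int}(A_r)$, so $\mathrm{esc}(A_r,\partial A_r,L_2(A))<0$) is your first implication applied at $\lambda=L_2(A)$. Its upper bound (a path of length $\ell>L_2(A)$ with $\mathrm{wcc}(A,\partial A,[f])>r$ still escapes $A_q$ for $q<r$) is your second implication combined with monotonicity. Your sandwich $L_1(q,e)\le L_1(A_q)\le L_2(q,e)$ just makes explicit the bookkeeping that the paper's terse proof, with its overloaded $q$, leaves implicit: the estimate $\mathrm{dist}(x,A_q)\ge \mathrm{dist}(x,A)-q$, the inclusion $\partial A_q\subseteq\{\mathrm{dist}^+\ge q\}$, and the monotonicity step.
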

\label{lem:onetwoeq}

\begin{proof}
    Let $q > L_2(A)$ , then there exists a path $[f]\in P^q$ and $r > 0$ with $\mathrm{wcc}(A,\partial A, [f]) > r$. Let $0< q < r$. $\mathrm{wcc}(A,\partial A, [f]) > r-q$, and the thus $\lim_{q\rightarrow 0^+} L_1(A_q)\leq L_2(A_q)$

    $L_1(A_r)$, for $r > 0$, is greater than $L_2(A)$, as $A$ itself is contained in the interior of $A_r$, and thus $\mathrm{esc}(A_r,\partial A_r, L_2(A)) < 0 $
\end{proof}

Since $\cap _{q > 0}A_q = A$, $\lim_{q\rightarrow 0^+} m(A_q) = m(A) $ for any positive $\epsilon$, and combining this with the lemma above, we obtain that the values $\frac{m(A_q)}{L_1(A_q)^2} \rightarrow \frac{m(A)}{L_2(A)^2}$ as $q \rightarrow 0^+$.

Since $A_r$ is also compact and is, for a sufficiently small scaling of $A$, contained in $[0,1]^2$, we can once more rewrite our expression for $\alpha $ as

$$\alpha = \inf\left\{\frac{m(A)}{L_1(A)^2: A\subset [0,1]^2 \text{ closed}}\right\}$$

However, $L_1$ itself is a supremum, namely it is $\sup\{q\in\Q: \mathrm{esc}(A,\partial A, q)\}< 0\}$, which makes $L_1(A)^{-2}$ equal to $\inf\{q^{-2}: q\in Q \text{ } \mathrm{esc}(A,\partial A, q)\}$. This, rather tantalizingly, presents $\alpha$ as an infimum over (1) compact subsets of $[0,1]^2$ and (2) negative value arguments of the escape function, which suggests that $\alpha$ is a one quantifier definable quantity, as long as the set of shapes has a countable subfamily which faithfully represents all arbitrarily low values of $\frac{m(A)}{L_1(A)^2}$.

\begin{lem}
There is a computable, and thus countable, set of shapes which have computable $\frac{m(A)}{L_1^2(A)^2}$ values that are dense in the full set of such values.
\end{lem}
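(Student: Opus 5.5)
Our candidate family is the finite unions of closed dyadic squares (or rational polygons) inside $[0,1]^2$, each perturbed by a small computable outward offset chosen so that $L_1=L_2$. For such a union $B$, the area $m(B)$ is a rational number, computed exactly. The signed distance function $\mathrm{dist}^+(B,\partial B,\cdot)$ is computable uniformly from the finite list of squares, since it is a min/max of finitely many explicit piecewise-algebraic distance functions. Hence, by the discussion at the start of Section \ref{sec:four}, $\mathrm{esc}(B,\partial B,\cdot)$ is uniformly computable, and $L_1(B)$ is uniformly left c.e. while $L_2(B)$ is uniformly right c.e.

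\textbf{Making the ratio computable.} Generally we only know $L_1(B)\le L_2(B)$, so we replace $B$ by a perturbation $B_r$. By the argument of Section \ref{sec:alwaysuniformly}, applied with Lemma \ref{lem:levels}, we can compute from the data of $B$ an arbitrarily small $r>0$ for which $L_1(B_r)=L_2(B_r)$. That argument diagonalizes against the $\Pi^0_1$ singletons in the defective set. For this $r$, the common value $L_1(B_r)$ is computable, being both left and right c.e. It is also positive, since $B_r$ has nonempty interior.

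The area $m(B_r)$ is the area of a finite union of rounded squares. It is computable: we can either integrate the computable indicator approximations, or use the Steiner-type formula for unions of convex bodies via inclusion--exclusion, whose terms are intersections of rounded squares with computable areas. So the family $\mathcal{F}=\{B_r\}$, indexed by finite dyadic unions and a rational precision parameter, is a computable family with computable values $m(B_r)/L_1(B_r)^2$.

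\textbf{Density.} Let $A\subset[0,1]^2$ be closed and $\eta>0$. By the remarks after Lemma \ref{lem:onetwoeq}, $m(A_q)/L_1(A_q)^2\to m(A)/L_2(A)^2$ as $q\to0^+$. I would take the dyadic cover $B$ of $A$ at scale $2^{-k}$, i.e.\ the union of closed dyadic squares meeting $A$. Then $A\subset B\subset A_{\sqrt2\,2^{-k}}$, so $B$ is sandwiched between $A$ and a small outward perturbation of $A$. Since containment is monotone, $\mathrm{esc}$ is antitone in the forest, and therefore
\[L_2(A)\le L_1(B)\le L_2(B)\le L_2(A_{\sqrt2\,2^{-k}}).\]
Also $m(B)\le m(A_{\sqrt2\,2^{-k}})\to m(A)$. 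Passing to $B_r$ with $r$ tiny gives $m(B_r)\to m(B)$ and keeps $L_1(B_r)$ between $L_2(B)$ and $L_2(B_{2r})$.

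\textbf{Main obstacle.} The hard step is showing that $L_2(A_s)\to L_2(A)$ as $s\to0^+$, which the sandwich needs in order to make the ratio for $B_r$ close to $m(A)/L_2(A)^2$. It amounts to right-continuity of $L_2$ under outward perturbation. I would prove it by compactness. Suppose $\mathrm{esc}(A_s,\partial A_s,q)\le0$ for all $s$ and some $q>L_2(A)$. Every path in $P^q$ can then be placed inside $A_s$ for every $s$, and a limiting placement, which exists by compactness of the placements and $\bigcap_s A_s=A$, places every path in $A$. This contradicts $\mathrm{esc}(A,\partial A,q)>0$.

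A subtlety is that membership in $A_s$ may place paths on the boundary rather than strictly inside, but strict escape in $A$ with a positive margin absorbs this via the $1$-Lipschitz property of $\mathrm{wcc}$. With this in hand, the ratios for $\mathcal{F}$ approximate $m(A)/L_2(A)^2$ from above to within $\eta$, which gives density and completes the proof.
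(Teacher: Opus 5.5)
Your argument is correct, but it takes a different route from the paper's and proves more than the paper's argument does.

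\textbf{The paper's route.} The paper uses the bare dyadic covers $B(A,n)$ with no further perturbation. It notes that $m(B(A,n))\downarrow m(A)$ and that $B(A,n)\subset A_q$ for large $n$, and then appeals to Lemma~\ref{lem:onetwoeq} to get $L_1(B(A,n))\to L_2(A)$. For those shapes it only knows that $L_1$ is left c.e., so the ratios are right c.e.\ rather than computable. That is all Theorem~\ref{thm:Moser} needs, but it does not literally deliver the ``computable values'' in the statement.

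\textbf{What your route buys.} Your extra step $B\mapsto B_r$ is what gives genuine computability, with $r$ produced by the diagonalization of Section~\ref{sec:alwaysuniformly} together with Lemma~\ref{lem:levels}. The diagonalization is sound because every defective level of $\mathrm{esc}(B,\partial B,\cdot)$ equals $\mathrm{esc}(B,\partial B,q)$ for some rational $q$, and these form a uniformly computable list. A side benefit is that your values $m(B_r)/L_2(B_r)^2$ lie in the very set being approximated. Another is that $L_1(B_r)=L_2(B_r)\ge L_2(A)$ follows from antitonicity alone. The paper's lower bound $L_1(B(A,n))\ge L_1(A)$ is too weak by itself; it really needs $A$ to sit in $\mathrm{int}\,B(A,n)$ with positive margin. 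The cost of your route is reliance on more machinery.

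\textbf{Minor points.}
\begin{enumerate}
\item Justify the computability of $L_1(B_r)$ by taking it to be the common value $L_1(r,\mathrm{esc}(B,\partial B))=L_2(r,\mathrm{esc}(B,\partial B))$. The proof of Lemma~\ref{lem:levels} sandwiches $L_1(B_r)\le L_2(B_r)$ between these two quantities. You then never need $\mathrm{dist}^+(B_r,\partial B_r,\cdot)$ itself to be computable at a computable real $r$, which needs extra care if $r$ is a critical value of $\mathrm{dist}(\cdot,B)$.
\item Your displayed inequality $L_2(A)\le L_1(B)$ does not follow from antitonicity, which only gives $L_1(A)\le L_1(B)$. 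You never actually need it, since you pass to $B_r$.
\item The ``main obstacle'' is easier than you suggest. If $\mathrm{esc}(A,\partial A,q)=\epsilon>0$, then $\mathrm{esc}(A_s,\partial A_s,q)\ge\epsilon-s$, because $\mathrm{dist}(y,A_s)=\mathrm{dist}(y,A)-s$ off $A_s$. This is essentially the content of Lemma~\ref{lem:onetwoeq}.
\item Your Hausdorff bound $\sqrt2\,2^{-k}$ is the correct one; the paper's $2^{-1-n}$ is too small, though harmless.
\end{enumerate}
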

\label{lem:inv}
\begin{proof}
    Let $A\subset [0,1]^2$ be compact. We let 

$$ N(i,j, n) = \left[\frac{i}{2^{n}}, \frac{(i+1)}{2^n}\right]\times\left[\frac{j}{2^{n}}, \frac{(j+1)}{2^n}\right]$$
$$ I(A,n) = \left\{ i,j \in [0,2^n]\cap\N: N(i,j,n)\cap A \neq \emptyset\right\}$$
and 
$$B(A,n) = \cup_{((i,j)\in I(A,n)}N(i,j,n)$$

$B(A,n)$ is always computable (just as a shape but also uniformly, if somewhat tautologically, in $A$). Not coincidentally, this the canonical dense subset used in the presentation of the Hausdorff metric space of the square as a computably presented compact metric space.

We have $\cap_{n\in\N} B(A,n) = A$, and so $\lim_{n\rightarrow \infty} m(B(A,n)) = m(A)$ as these sets are nested. Furthermore, they each contain $A$ itself, and thus $L_1(B(A,n)) \geq L_1(A)$. Furthermore, $d_H(B(A,n), A) < 2^{-1-n }$, so for any positive $q$ there is an $N$ with $B(A,n)\subset A_{q}$ for any $n > N$, which means that
$ \lim_{n\rightarrow \infty}L_1(B(A,n)) = \lim_{q\rightarrow 0^+}L_1 (A_q) = L_2(A)$
\end{proof}

We thus only need to consider shapes which are finite unions of closed dyadic squares, and can rewrite $\alpha$ one final time
$$\alpha = \inf\left\{\frac{m(A)}{L_1(A)^2: \text{ $A$ a finite union of Dyadic squares}}\right\}$$

Since the finite unions of dyadic squares are a computable set of computable subsets of $[0,1]^2$, meaning they have computable $dist^+$ functions, the associated $L_1$ values are uniformly left c.e., and their measures are straightforwardly computable, meaning the ratios $\frac{m(A)}{L_1(A)^2}$ are right c.e. and so is $\alpha$, which gives us the final theorem of this paper.

\begin{thm}
    The infimum of possible areas of shapes which cover all paths of length $1$ in the plane is a right c.e. number.
\end{thm}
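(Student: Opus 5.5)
The plan is to present $\alpha$ as the infimum of a uniformly computable family of right c.e. reals, and then enumerate rational upper bounds. First I would fix a computable enumeration $(A_k)_{k\in\N}$ of all finite unions of closed dyadic squares in $[0,1]^2$. Each $A_k$ has a $\mathrm{dist}^+$ function that is computable uniformly in $k$, since it is built from finitely many rational squares. By the discussion following Lemma \ref{lem:inv}, I would take as given the representation
$$\alpha = \inf_k \frac{m(A_k)}{L_1(A_k)^2}.$$
In this form only countably many shapes are involved. The measure $m(A_k)$ is a rational number computable from $k$.

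Next I would argue that $L_1(A_k)$ is left c.e. uniformly in $k$. The escape function $\mathrm{esc}(A_k,\partial A_k,\cdot)$ is uniformly computable in $\mathrm{dist}^+$, by the effective Arzela--Ascoli theorem (Theorem \ref{thm:effArAs}, case (2)) together with the computations of $\mathrm{con}$ and $\mathrm{wcc}$ in Section \ref{sec:four}. The condition $\mathrm{esc}(A_k,\partial A_k,q)<0$ is a strict inequality on a computable real, so it is $\Sigma^0_1$ uniformly in $(k,q)$. Because $\mathrm{esc}$ is nondecreasing, $L_1(A_k)=\sup\{q\in\Q:\mathrm{esc}(A_k,\partial A_k,q)<0\}$, and enumerating the witnesses $q$ gives a left enumeration of $L_1(A_k)$. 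The supremum is positive: a nonempty $A_k$ contains a square of side $2^{-n}$ with an interior point, so short paths are contained in it. This means we can restrict to positive witnesses $q$. The ratio $m(A_k)/q^2$ then decreases as $q$ increases, so it is right c.e. uniformly in $k$.

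Finally I would dovetail over all pairs $(k,q)$ with $q>0$ rational and $\mathrm{esc}(A_k,\partial A_k,q)<0$ confirmed, and output the rational numbers $m(A_k)/q^2$. Every output is an upper bound for $m(A_k)/L_1(A_k)^2$, and hence for $\alpha$. For the converse, given $\beta>\alpha$, I would pick $k$ with $m(A_k)/L_1(A_k)^2<\beta$, and then a rational $q<L_1(A_k)$ close enough that $m(A_k)/q^2<\beta$. So the infimum of the enumerated rationals is exactly $\alpha$, and the set $\{r\in\Q: r>\alpha\}$ is c.e., which is what it means for $\alpha$ to be right c.e.

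I expect the main obstacle to be the justification that restricting to dyadic unions and replacing $L_2$ by $L_1$ leaves the infimum unchanged. This rests on Lemma \ref{lem:onetwoeq} and Lemma \ref{lem:inv}, which I take as given. Beyond that, the only remaining care is the degenerate cases: empty shapes, and shapes where $L_1$ could be $0$. Both are harmless. The empty shape gives no valid witness and is excluded by requiring $q>0$. A ratio with $L_1=0$ would be infinite and so does not affect the infimum.
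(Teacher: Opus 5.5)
Your proposal is correct and follows the paper's own argument. You reduce to finite unions of dyadic squares via the paper's dyadic-approximation step, note that $L_1(A_k)$ is uniformly left c.e. because $\mathrm{esc}$ is uniformly computable while $m(A_k)$ is rational, and conclude that $\alpha$ is the infimum of uniformly right c.e. ratios $m(A_k)/L_1(A_k)^2$, hence right c.e. Your explicit dovetailing, the check that $L_1(A_k)>0$ for nonempty $A_k$, and the exclusion of the empty shape only fill in details the paper leaves implicit.
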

\label{thm:Moser}

This number, of course, is the answer to the Moser worm problem.
%%%%%%%%%%%%%%%%%%%%   End of main body of article
%
%                             References
%
%   BiBTeX users uncomment the following line:
%
\bibliographystyle{plain}
\nocite{Zalgaller1}
\nocite{Zalgaller2}
\nocite{williams2002million}
\nocite{wetzel2005letter}
\nocite{isbell1957optimal}
\nocite{bell}
\nocite{finch2004lost}
\nocite{moser1966poorly}
\nocite{DH}
\nocite{klausw}
\nocite{bbi}
\bibliography{bibliography}

\end{document}